\numberwithin{equation}{section}
\newcommand{\mc}[1]{\mathcal{#1}}
\newcommand{\ud}{\,\mathrm{d}}
\renewcommand{\abs}[1]{\left\vert#1\right\vert}
\newcommand{\norm}[1]{\left\Vert#1\right\Vert}
\newcommand{\wt}[1]{\widetilde{#1}}
\newcommand{\wh}[1]{\widehat{#1}}
\newcommand{\Or}{\mathcal{O}}
\newcommand*{\I}{\imath}
\newcommand{\RR}{\mathbb{R}}
\newcommand{\ZZ}{\mathbb{Z}}
\newcommand{\REV}[1]{{#1}}
\newtheorem*{theorem*}{Theorem}
\newtheorem{prop}{Proposition}[section]
\begin{document}
\Year{2016} %
\Month{January}
\Vol{59} %
\No{1} %
\BeginPage{1} %
\EndPage{XX} %
\AuthorMark{LIN L. and LU J.}
\ReceivedDay{November 17, 2014}
\AcceptedDay{January 22, 2015}
\PublishedOnlineDay{; published online January 22, 2016}
\DOI{10.1007/s11425-000-0000-0} 

\title[Decay estimates of discretized Green's functions]{
  Decay estimates of discretized Green's functions for Schr\"odinger
  type operators}{}

\author[1]{LIN Lin}{}
\author[2]{LU Jianfeng}{}

\address[{\rm1}]{Department of Mathematics, University of California,
Berkeley \\
and 
Computational Research Division, Lawrence Berkeley National
Laboratory, Berkeley CA 94720 USA}
\address[{\rm2}]{Department of Mathematics, Department of Physics, and
Department of Chemistry, \\
Duke University, Box 90320, Durham NC 27708 USA}
\Emails{linlin@math.berkeley.edu,jianfeng@math.duke.edu}

\maketitle

 {\begin{center}
\parbox{14.5cm}{\begin{abstract}
 For a sparse non-singular matrix $A$, generally $A^{-1}$ is a dense
  matrix. However, for a class of matrices, $A^{-1}$ can be a matrix
  with off-diagonal decay properties, \textit{i.e.,} $\lvert
  A^{-1}_{ij}\rvert$ decays fast to $0$ with respect to the increase
  of a properly defined distance between $i$ and $j$.  Here we
  consider the off-diagonal decay properties of discretized Green's
  functions for Schr\"odinger type operators. We provide decay
  estimates for discretized Green's functions obtained from the finite
  difference discretization, and from a variant of the pseudo-spectral
  discretization. \REV{The asymptotic decay rate in our estimate} is
  independent of the domain size and of the discretization
  parameter. We verify the decay estimate with numerical results for
  one-dimensional Schr\"odinger type operators.\vspace{-3mm}
\end{abstract}}\end{center}}

\keywords{Decay estimates, Green's function, Schr\"odinger operator,
finite difference discretization, pseudo-spectral discretization}

\MSC{65N80, 65Z05}

\renewcommand{\baselinestretch}{1.2}
\begin{center} \renewcommand{\arraystretch}{1.5}
{\begin{tabular}{lp{0.8\textwidth}} \hline \scriptsize
{\bf Citation:}\!\!\!\!&\scriptsize LIN L. and LU J. \makeatletter\@titlehead.
Sci China Math, 2016, 59,
 doi:~\@DOI\makeatother\vspace{1mm}
\\
\hline
\end{tabular}}\end{center}

\baselineskip 11pt\parindent=10.8pt  \wuhao

\section{Introduction}

Consider the following Schr\"odinger type partial differential equation
\begin{equation}
  \lambda G(x,y) - (-\Delta + V(x)) G(x,y) = \delta(x-y), \quad x,y\in
  \Omega\subset \mathbb{R}^{d}.
  \label{eqn:problem}
\end{equation}
Here $\Omega=[0,L)^{d}$ is a cubic domain with periodic boundary
conditions. $V(x)$ is a real, smooth potential function. Here $\delta(x)$
is the Dirac $\delta$-distribution, \REV{and} $\lambda\in\mathbb{C}$ is in the
resolvent set of the Hamiltonian operator $H:=-\Delta + V(x)$. Then
$G$ is called the Green's function of $\lambda - H$. It can be shown
that $G$ decays exponentially to zero along the off-diagonal
direction.  Roughly speaking, if the domain size $L$ is large enough,
for each fixed $y\in\Omega$, the magnitude of $G(x,y)$ decays exponentially as
$d(x,y)$ increases, where $d(x,y)$ is the distance between
$x,y\in \Omega$ interpreted in the periodic sense. Furthermore, such
decay rate is independent of the domain size $L$. In fact, the following
theorem has been established in the previous work~\cite{ELu:11} by one
of the authors for Hamiltonian operators defined on $\RR^d$ (and hence
contains the current periodic case as a special situation).

\begin{theorem*}[E-Lu 2011~\cite{ELu:11}] Assume
  $\lambda$ lies in the resolvent set of $H$, then there exist constants
  $\gamma,C>0$ such that
  \[
  \sup_{y\in \mathbb{R}^{d}}\norm{e^{\gamma ((\cdot -y)^2+1)^{1/2}}
  (\lambda-H)^{-1} e^{-\gamma ((\cdot -y)^2+1)^{1/2}}}_{\mathcal{L}(L^2)}\le C, 
  \]
  where the exponential functions are viewed as multiplication
  operators.
\end{theorem*}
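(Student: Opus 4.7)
The plan is to use the classical Combes--Thomas conjugation trick. For each fixed $y\in\RR^d$, set $\phi_y(x) := ((x-y)^2+1)^{1/2}$, view $e^{\pm\gamma\phi_y}$ as bounded multiplication operators on $L^2(\RR^d)$ with bounded inverses, and introduce the similarity transform $H_\gamma := e^{\gamma\phi_y}H e^{-\gamma\phi_y}$. One then has the formal operator identity
\[
e^{\gamma\phi_y}(\lambda - H)^{-1}e^{-\gamma\phi_y} \;=\; (\lambda - H_\gamma)^{-1},
\]
so the conclusion of the theorem reduces to an $\mathcal{L}(L^2)$-bound on $(\lambda - H_\gamma)^{-1}$ that is uniform in $y$. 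The two features of the weight that will carry this uniformity are the pointwise estimates $|\nabla\phi_y(x)|\le 1$ and $|\Delta\phi_y(x)|\le C_d$, both independent of $y$ since $\phi_y(\cdot)=\phi_0(\cdot-y)$ is a translate of a single smooth radial function.

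Next, I would compute the perturbation explicitly. Using the commutator identity $e^{\gamma\phi}\partial_j e^{-\gamma\phi}=\partial_j-\gamma\,\partial_j\phi$ on each coordinate, a direct calculation gives
\[
B_\gamma \;:=\; H_\gamma - H \;=\; -2\gamma\,\nabla\phi_y\!\cdot\!\nabla \;+\; \gamma\,\Delta\phi_y \;-\; \gamma^2\,|\nabla\phi_y|^2.
\]
Factoring $\lambda - H_\gamma = (\lambda - H)\bigl(I - (\lambda-H)^{-1}B_\gamma\bigr)$ reduces the problem to showing that $\norm{(\lambda - H)^{-1}B_\gamma}_{\mathcal{L}(L^2)}\le \tfrac12$ for all sufficiently small $\gamma>0$, with the smallness of $\gamma$ independent of $y$; a Neumann series will then give $\norm{(\lambda - H_\gamma)^{-1}}\le 2\norm{(\lambda-H)^{-1}}$, which is the constant $C$ in the statement. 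The zeroth-order pieces $\gamma\,\Delta\phi_y$ and $\gamma^2|\nabla\phi_y|^2$ are bounded multiplication operators of $L^\infty$-norm of order $\gamma+\gamma^2$, so their contribution is immediate.

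The technical heart of the argument is the first-order term $\gamma\,\nabla\phi_y\!\cdot\!\nabla$, which is only relatively bounded with respect to $H$. I would control it by dualizing and integrating by parts: writing $R(\mu):=(\mu - H)^{-1}$ and using $H^*=H$ (since $V$ is real),
\[
\bigl(R(\lambda)\,\nabla\phi_y\!\cdot\!\nabla\bigr)^{*}
\;=\; -\,\nabla\phi_y\!\cdot\!\nabla R(\bar\lambda)\;-\;(\Delta\phi_y)\,R(\bar\lambda),
\]
whose $L^2$ operator norm is bounded in terms of the $L^\infty$ bounds on $\nabla\phi_y,\Delta\phi_y$ together with $\norm{R(\bar\lambda)}$ and $\norm{\nabla R(\bar\lambda)}$. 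The bound $\norm{\nabla R(\bar\lambda)}\le C$ then follows from a standard energy estimate: for $u=R(\bar\lambda)g$, $\norm{\nabla u}^2 = \langle(H-V)u,u\rangle$, while $Hu = \bar\lambda u - g$ and $V$ is bounded, so $\norm{\nabla u}^2$ is controlled by $\norm{g}^2$ up to $\lambda$- and $V$-dependent constants.

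I expect the main obstacle to be precisely this relative bound on the first-order perturbation, since everything else is soft operator algebra. The proof needs both the uniform pointwise control $|\nabla\phi_y|, |\Delta\phi_y|\le\text{const}$ coming from the regularized distance $\phi_y = ((\cdot - y)^2+1)^{1/2}$ (as opposed to the singular choice $|\cdot - y|$), and enough regularity of $V$ for $\nabla R(\bar\lambda)$ to be $L^2$-bounded. Together these ingredients deliver $\norm{R(\lambda)B_\gamma}_{\mathcal{L}(L^2)}\le C_0\gamma$ with $C_0$ independent of $y$, after which choosing $\gamma$ with $C_0\gamma\le \tfrac12$ closes the argument via the Neumann series, yielding the claimed uniform-in-$y$ estimate.
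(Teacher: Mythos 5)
Your proposal is the standard Combes--Thomas conjugation argument, which is exactly the strategy behind this result: note that the paper does not actually prove this theorem (it is imported from \cite{ELu:11}), but its proof of the discrete analogue, Theorem~\ref{thm:fdexp}, is the same scheme you describe --- conjugate the resolvent, expand the conjugated Laplacian via a Leibniz rule, absorb the zeroth-order pieces directly, control the first-order piece by a relative bound against $(\lambda-H)^{-1}$, and close with a Neumann series for small $\gamma$. The one structural difference is where the relative bound is obtained: the paper factors the perturbation to the \emph{left} of the resolvent and bounds $\mc{D}^{\pm}(\lambda-H)^{-1}$ by inserting $(1-\Delta)^{-1}(1-\Delta)$, whereas you factor it to the right and recover the same bound by taking adjoints plus an energy estimate for $\nabla(\bar\lambda-H)^{-1}$. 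Both are correct and essentially equivalent; your remark that ``regularity of $V$'' is needed is not --- boundedness of $V$ suffices, as your own energy estimate shows. (The sign of the first-order term in $B_\gamma$ should be $+2\gamma\,\nabla\phi_y\cdot\nabla$, but this is immaterial.)

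There is one genuine inaccuracy you must repair: on $\RR^d$ the weight $e^{\gamma\phi_y}$ with $\phi_y(x)=((x-y)^2+1)^{1/2}$ is \emph{not} a bounded multiplication operator (it blows up as $\abs{x}\to\infty$), so the identity $e^{\gamma\phi_y}(\lambda-H)^{-1}e^{-\gamma\phi_y}=(\lambda-H_\gamma)^{-1}$ cannot be asserted by ``soft operator algebra.'' The standard fix is to run the whole argument with a family of bounded regularized weights, e.g.\ $\phi_{y,\epsilon}=\phi_y/(1+\epsilon\phi_y)$, whose gradients and Laplacians are bounded uniformly in $\epsilon$ and $y$, obtain the resolvent bound uniformly in $\epsilon$, and pass to the limit $\epsilon\to 0$ by Fatou/monotone convergence on matrix elements. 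This issue simply does not arise in the paper's setting because the periodic domain is finite, so $d_L$ is bounded by $d_{\max}$ and the conjugation is a genuine similarity transform; on $\RR^d$ the limiting step is a necessary part of the proof, not a formality.
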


The decay property is a powerful tool for
designing efficient numerical methods, such as the sparse approximate
inverse preconditioner (AINV)~\cite{BenziMeyerTuma1996,BenziTuma1998},
incomplete $LU$ and Cholesky type factorization~\cite{Saad1994}, and
localized spectrum slicing~\cite{Lin2014}.  It also has profound
implication in science and engineering applications.  In quantum
physics literature, the exponential decay property of Green's functions
and related physical quantities is referred to as
the ``near-sightedness principle''~\cite{Kohn1996,ProdanKohn2005} of
electronic matters. A variety of ``linear scaling'' methods for
solving Kohn-Sham density functional theory~\cite{KohnSham1965} for gapped systems have
been proposed in the past two decades, such as the divide-and-conquer
method~\cite{Yang1991,ChenLu2014}, and remains an active research
field (see e.g., the review articles
\cite{Goedecker1999,BowlerMiyazaki2012,BenziBoitoRazouk2013}).

In order to develop efficient numerical methods taking advantage of the
decay property of Green's functions, we require the Schr\"odinger
operator to be discretized using a certain numerical scheme. It turns
out that not all numerical discretization schemes lead to
\textit{discretized}
Green's function with exponentially decaying  off-diagonal elements.
This paper is concerned with demonstrating that discretized
Green's functions obtained from proper numerical schemes also have decay
properties, either exponentially or super-algebraically.  
We obtain \REV{decay estimates of which} the decay rate
is asymptotically independent of the discretization parameter (e.g., the
grid size in finite difference discretization), and of the domain
size.  To the best of our knowledge, such results were not known in
previous literature.

\subsection*{Previous work:}

The exponential decay properties of Green's functions in the
continuous setup and the related exponential decay of
eigenfunctions of elliptic operators have been widely studied
(see e.g., \cites{Agmon:65, CombesThomas:73, Simon:83, ELu:11, ELu:13}). 

In the discretized setup, the exponential decay of discretized Green's
functions was first studied in~\cite{Demko1977,DemkoMossSmith1984} for
the matrix inverse $A^{-1}$, where $A$ is assumed to be a banded,
positive definite matrix. In order to generalize from banded matrices
to general sparse matrices, decay properties should be defined using
geodesic distances of the graph induced by $A$. These techniques have
been used in~\cite{BenziBoitoRazouk2013,BenziRazouk2007} and
references therein, for demonstrating the decay properties of e.g.,
Fermi-Dirac operators in electronic structure theory. This type of
decay estimate relies on the following facts: 1) a complex analytic
function such as $z^{-1}$ where $z$ belongs to a simply connected
complex domain away from $0$, can be efficiently expanded using
polynomials of controllably low degrees, and 2) when the matrix size
is sufficiently large, a finite term polynomial of a sparse matrix
remains a sparse matrix. This argument can be further generalized to
non-sparse matrices with exponentially decaying off-diagonal elements,
and is not restricted to Schr\"odinger type operators in
Eq.~\eqref{eqn:problem}.  It can be shown that the exponent for the
exponential decay estimate is \REV{bounded by a constant while
  increasing} the domain size $L$. However, the decay rate is \REV{not
  uniform} with respect to the refinement of the discretization
parameter.

Simply speaking, the reason why the general argument above cannot
produce \REV{optimal} decay estimates with increasingly refined discretization
is as follows.  Due to the presence of the Laplacian operator
$-\Delta$, $H$ is an unbounded operator. The spectral radius of the
discretized $H$ increases as the discretization refines. For instance,
for finite difference discretization with uniform grid spacing $\Delta
x$, the spectral radius of the discretized $H$ increases as
$\Or(\Delta x^{-2})$. As a result, the order of polynomials needed to
accurately approximate the complex analytic function such as $z^{-1}$
increases as $\Or(\Delta x^{-2})$, and the decay rate deteriorates.
In the limit when the $\Delta x\to 0$, it can be shown that the
exponential decay rate in the ``physical'' space approaches $0$.
However, as $\Delta x\to 0$ the discretized Green's function should
well approximate the continuous Green's function up to consistency
error, and hence should share the decay property of the continuous
Green's functions.  The discrepancy between the decay properties of
the discrete and continuous versions of the Green's functions is due to
the fact that such decay estimates for discretized Green's function
provides only \textit{a lower bound} of the exponential decay rate,
and such lower bound is not \REV{optimal}.  Therefore, this type of estimate
is mostly suitable for discretized $H$ with relatively small spectral
radius, i.e., discretization with low to medium accuracy. It is
desirable to have a better estimate which correctly captures the decay behavior
for all accuracy level.

\subsection*{Our contribution:}

In this paper we provide decay estimates of discretized Green's
functions for Sch\"odinger type operators. \REV{The decay rate of our
  estimates} is asymptotically independent of both the domain size and
the discretization parameter. We demonstrate the decay estimate for
two types of discretization: finite difference discretization and a
variant of the pseudo-spectral discretization.  \REV{Our result is
  explicitly stated for one-dimensional Schr\"odinger type
  operators. However, generalization to Schr\"odinger type operators
  in higher dimensions is straightforward with necessary notational
  changes.}

For the finite difference discretization, our argument is analogous to
the decay estimate of continuous Green's
functions~\cite{ELu:11}. Compared to the general argument in
e.g.,~\cite{Demko1977,BenziBoitoRazouk2013} based on matrix sparsity,
our method specifically exploits the structure of the discretized
Laplacian operator. More specifically, we use the discretized Green's
function, which is a matrix of bounded spectral radius, to control
other operators with diverging spectral radius. Such operators include
the discretized first and second order differential operators. We find
that the discretized Green's function decays exponentially along the
off-diagonal direction (see Theorem~\ref{thm:fdexp}).

For the pseudo-spectral discretization, the off-diagonal elements of the
discretized $H$ only decay
polynomially. We verify
numerically that the corresponding discretized Green's function
\textit{does not} decay exponentially along the off-diagonal
direction.  However, if we systematically mollify the high end of the spectrum of the
discretized Laplacian operator, the resulting discretized $H$ will
decay super-algebraically along the off-diagonal direction. We refer
to this scheme as the mollified pseudo-spectral method (mPS).  We
demonstrate that the off-diagonal elements of the discretized Green's function corresponding to the
mPS discretization decay super-algebraically. For any given
polynomial order, the decay rate does not depend on the domain length
or the discretization parameter (see Theorem~\ref{thm:Dmboundfinish}
and Theorem~\ref{thm:Dmboundgeneral}). The proof of this result relies
on the discrete version of the relation between the regularity of the
Fourier space and the decay in the real space.

\subsection*{Notation:}
The following notation is used throughout the paper. With some abuse
of notation, unless otherwise clarified, the symbol $H$ denotes both
the continuous operator $-\Delta+V(x)$, and its discretized matrix, for
both the finite difference discretization and for the pseudo-spectral
type discretization.
Similarly $G$ denotes both the continuous Green's function for the
operator $\lambda-H$ and its discretized matrix.  $\I$ stands for the
imaginary unit. The complex conjugate of a complex number $f$ is
denoted by $f^{*}$. The identity matrix is denoted by $I$. When the identity matrix is multiplied by a scalar
$\lambda$, the matrix $\lambda I$ is also denoted by $\lambda$ for
simplicity, unless otherwise clarified.

For simplicity of the notation, we will restrict ourselves to the
cases that the computational domain is an interval $\Omega=[0, L)$ in one
spatial dimension. The extension to higher spatial dimensional
rectangular computational domain is straightforward.  The
computational domain is discretized by $N$ equispaced grid points:
$\mc{X} = \{x_i \mid x_i = i \Delta x, i = 0, 1, \ldots, N-1\}$, where
$\Delta x = L/N$ is the grid size. 

Throughout this paper, since
we are only interested in the asymptotic decay behavior, we will
assume that $L \geq 1$ and also without loss of generality $\Delta x
\leq 1$.

For a lattice function $f: \mc{X} \to \RR$, we define its
$L^2(\mc{X})$ norm as
\begin{equation}
  \norm{f}_{L^2(\mc{X})}^2  = \Delta x \sum_{x \in \mc{X}} \abs{f(x)}^2,  
  \label{eqn:normL2}
\end{equation}
so that as $\Delta x \to 0$, it converges to the continuous $L^2$ norm
on $[0, L)$. Similarly the $L^{\infty}(\mc{X})$ norm is defined as
\begin{equation}
  \norm{f}_{L^\infty(\mc{X})}  = \max_{x\in\mc{X}} \abs{f(x)}.  
  \label{eqn:normLinf}
\end{equation}
For simplicity of the notation, we will use $\norm{f}_{2}$ and
$\norm{f}_{\infty}$ interchangeably with $\norm{f}_{L^{2}(\mc{X})}$ and
$\norm{f}_{L^{\infty}(\mc{X})}$, respectively.

We will focus on periodic boundary condition, so that a function
$f(x)$ defined on the finite lattice $\mc{X}$ can be extended to a
periodic function on the
infinite lattice $\Delta x \ZZ$ such that $f( x + L ) = f(x), x\in
\mc{X}$. 

We will use $C$ for generic absolute constants whose value may change
from line to line. Specific constants are denoted as e.g., $C_m$ where
the subscript $m$ indicates the dependence of the constant on the
parameter $m$.

\subsection*{Organization:}
This paper is organized as follows. We estimate the decay rate for the
finite difference discretization in section~\ref{sec:fd}, and the decay
rate for the mollified pseudo-differential discretization in
section~\ref{sec:ps}. Numerical results demonstrating the decay
rate is provided in section~\ref{sec:numer}, and we conclude in
section~\ref{sec:conclusion}.

\section{Finite difference discretization}\label{sec:fd}
 
In this paper we focus on the second order finite difference
discretization, and it is possible to generalize the analysis to higher
order finite difference discretization schemes.
We define the forward and backward difference operators for $x \in
\mc{X}$, respectively as
\begin{align}
  & \bigl( \mc{D}^+ f \bigr)(x) = \frac{1}{\Delta x} \bigl( f(x + \Delta x) - f(x) \bigr), \\
  & \bigl( \mc{D}^- f \bigr)(x) = \frac{1}{\Delta x} \bigl( f(x) - f(x -
    \Delta x) \bigr).
\end{align}

The Hamiltonian operator in the second order 
finite difference discretization is 
\begin{equation}
  H = \mc{D}^+ \mc{D}^- + V. 
\end{equation}
Here the potential function $V(x)$ is discretized into a lattice
function $V:\mc{X} \to \RR$ with bounded $L^{\infty}(\mc{X})$ norm. With
some abuse of notation, unless otherwise clarified, we use $\Delta=\mc{D}^+ \mc{D}^-$ 
to denote  the discretized Laplacian operator as well.

Since periodic boundary condition is used, the natural distance
between two grid points $x,y\in \mc{X}$ is the periodic distance 
\begin{equation*}
  \wt{d}_{L}(x, y) = \min
  \bigl\{ \abs{ x - y - L k }, \; k \in \ZZ \bigr\}.
\end{equation*}
As in the continuous case, we need to
mollify the distance to remove singularities as
\begin{equation}
  d_{L}(x, y) :=  d_{\max} - \Bigl( \bigl[ d_{\max} -   (\wt{d}_{L}(x, y)^2 + 1)^{1/2} \bigr]^2 + 1 \Bigr)^{1/2},
\end{equation}
where $d_{\max} = \max\, (\wt{d}_{L}(x, y)^2 + 1)^{1/2} = (L^2/4 +
1)^{1/2}$.  Note that the slightly complicated looking formula is due to the
necessity of mollification when $\wt{d}_{L}$ is either $0$ or
$L/2$.  \REV{Fig.~\ref{fig:dL} gives an example of the distance function
$d_{L}(x,0)$ with $L=40$}.

\begin{figure}[h]
  \begin{center}
    \includegraphics[width=0.3\textwidth]{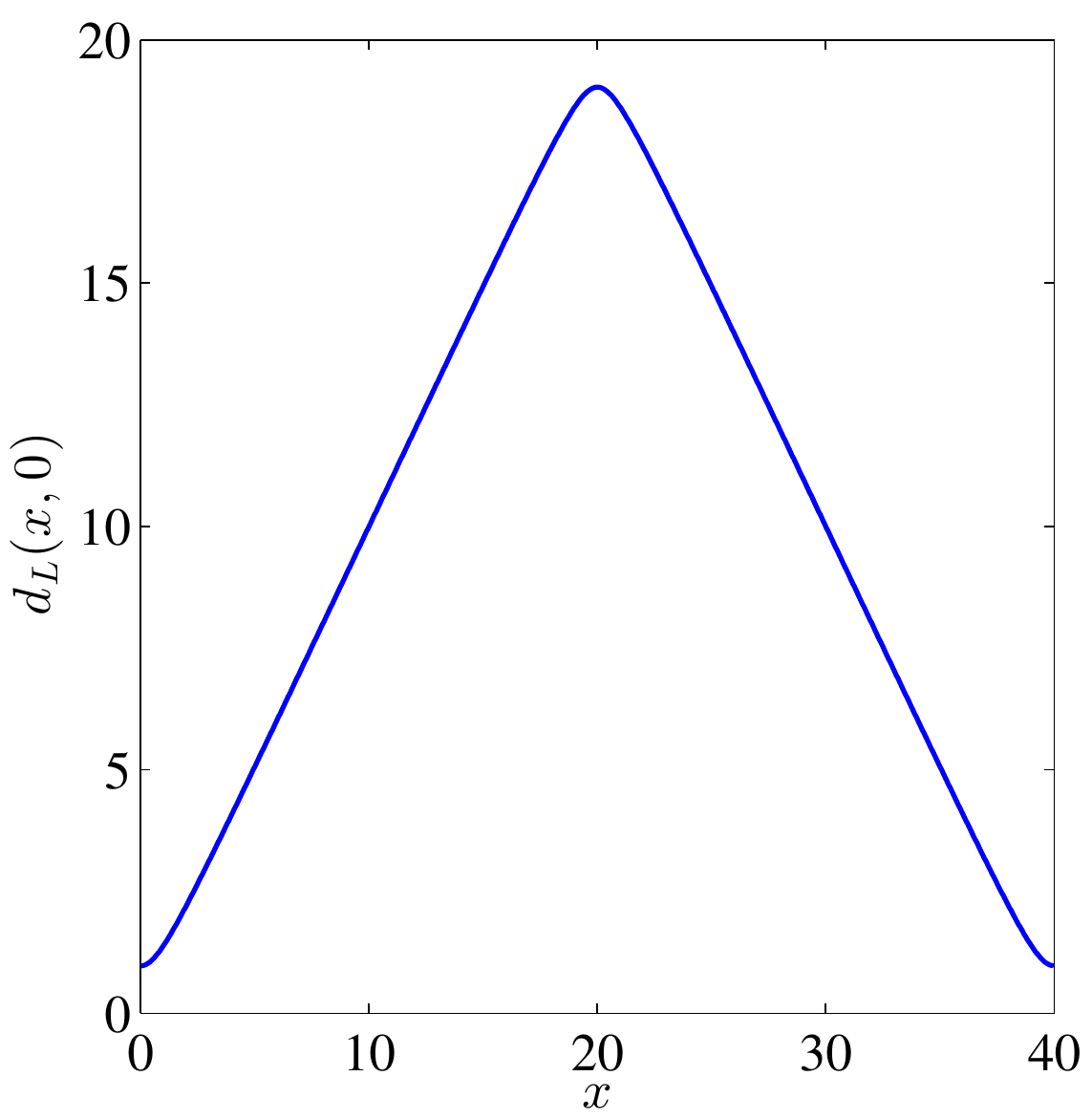}
  \end{center}
  \caption{An illustration of the smoothed distance function $d_{L}(x,0)$ with $L=40$.}
  \label{fig:dL}
\end{figure}

The following lemma collects the properties of $d_L$ that will be used
for proving Theorem~\ref{thm:fdexp}.
\begin{lemma}\label{lem:dl}
  For fixed $y \in \mc{X}$, the function $d_L(\cdot, y)$ is twice
  continuous differentiable and the derivatives are bounded uniformly
  in $L$ and $\Delta x$.
\end{lemma}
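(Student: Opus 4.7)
My plan is to reduce the claim to local analysis near the two singular loci of $\wt{d}_L(\cdot,y)$. Observe first that $d_L$ is defined purely through $\wt{d}_L(x,y)$ with no reference to the grid, so uniformity in $\Delta x$ is automatic; it remains to track the dependence on $L$. The function $\wt{d}_L(\cdot, y)$ is piecewise affine in $x$ with kinks exactly at $\wt{d}_L = 0$ (i.e.\ $x = y + Lk$, $k\in \ZZ$) and at $\wt{d}_L = L/2$. Away from these points $d_L$ is visibly $C^{\infty}$ in $x$, so the work localizes to neighborhoods of the two types of singular points.

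Near $x = y \pmod{L}$, the squaring already smooths out the $|x-y|$ kink, since $\wt{d}_L(x,y)^2 = (x-y-Lk)^2$ is smooth on the appropriate sheet. Hence $a(x) := (\wt{d}_L^2+1)^{1/2}$, $b(x) := d_{\max} - a(x)$, and $d_L = d_{\max} - (b^2+1)^{1/2}$ are all $C^{\infty}$ in such a neighborhood, and the chain rule combined with the elementary estimates $|b_x| = |\wt{d}_L|/a \le 1$ and $|b_{xx}| = 1/a^3 \le 1$ yields uniform bounds. Applied to $\partial_x d_L = -b\,b_x/(b^2+1)^{1/2}$ and to the analogous formula for $\partial_{xx} d_L$, the only seemingly $L$-growing factor $|b| \le d_{\max}$ is compensated everywhere by the matching factor $(b^2+1)^{1/2} \ge |b|$ in the denominator.

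The nontrivial case is the neighborhood of $x = y + L/2 \pmod{L}$. Setting $t = x - y - L/2$, we have $\wt{d}_L = L/2 - |t|$ for small $t$, so $b$ has a Lipschitz but non-smooth behavior $b(t) = \frac{L}{2 d_{\max}} |t| + O(t^2)$. The key point — relying on the evenness of $\wt{d}_L^2$ in $t$ — is that $b(t)^2 = \frac{L^2}{4 d_{\max}^2}\, t^2 + O(|t|^3)$, so the odd powers of $|t|$ enter only at order three and higher. Consequently $(b^2+1)^{1/2}$ is $C^2$ at $t=0$, and reading off the Taylor coefficients gives $\partial_x d_L = 0$ and $\partial_{xx} d_L = -L^2/(4 d_{\max}^2)$ at this point, which is $O(1)$ since $d_{\max}^2 = L^2/4 + 1$.

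The main obstacle I anticipate is verifying genuine $C^2$ regularity — not merely pointwise boundedness — at the locus $\wt{d}_L = L/2$: one has to confirm that the one-sided limits of $\partial_{xx} d_L$ computed from the two smooth branches of $\wt{d}_L$ coincide with the value obtained from the Taylor expansion above. Once this matching is in place, combining the smooth-region estimate with the explicit values at the kink yields the uniform $L^\infty$ control on $\partial_x d_L$ and $\partial_{xx} d_L$ claimed in the lemma.
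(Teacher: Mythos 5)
This is correct and essentially the paper's own argument: both proofs exploit that $d_L$ is a smooth function of $\wt{d}_L^2$ away from the kinks of $\wt{d}_L$, obtain uniformity in $L$ from $|b_x|\le 1$, $|b_{xx}|\le 1$ together with the cancellation $|b|\le (b^2+1)^{1/2}$ (the paper's $|\phi|,|\phi'|\le 1$), and verify $C^2$ matching at the two loci $\wt{d}_L=0$ and $\wt{d}_L=L/2$, your Taylor expansion of $b^2$ in $|t|$ at the latter being a repackaging of the paper's one-sided-limit computation. Your value $\partial_{xx}d_L=-L^2/(4d_{\max}^2)=-\phi^2(L/2)$ at that kink is the correct one (the paper's displayed limit $-\phi^2(L/2)\phi(0)$ appears to contain a typo and should read $-\phi^2(L/2)\phi'(0)$; the continuity conclusion is unaffected).
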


\begin{proof}
  We include the elementary proof here for completeness.  We fix $y = 0$
  without loss of generality, and have
  \begin{equation*}
    \begin{aligned}
      d_{L}(x, 0) & = d_{\max} - \Bigl( \bigl[ d_{\max} -   \bigl(
    \min(\abs{x}, \abs{x - L})^2 + 1 \bigr)^{1/2} \bigr]^2 + 1 \Bigr)^{1/2}  \\
    & = 
    \begin{cases}
      d_{\max} - \Bigl( \bigl[ d_{\max} - \bigl( x^2 + 1
      \bigr)^{1/2} \bigr]^2 + 1 \Bigr)^{1/2}, & x \in [0, L/2), \\
      d_{\max} - \Bigl( \bigl[ d_{\max} - \bigl( (L - x)^2 + 1
      \bigr)^{1/2} \bigr]^2 + 1 \Bigr)^{1/2}, & x \in [L/2, L).
    \end{cases}
    \end{aligned}
  \end{equation*}
  We calculate the derivative of $d_{L}(x, 0)$ in each interval as
  follows
  \begin{equation*}
    \frac{\partial d_{L}(x, 0)}{\partial x} = 
    \begin{cases}
      \dfrac{ x \Bigl(d_{\max} - \bigl( x^2 + 1
        \bigr)^{1/2}\Bigr)}{ \bigl(x^2 + 1\bigr)^{1/2} \Bigl( \bigl[
        d_{\max} - \bigl( x^2 + 1 \bigr)^{1/2} \bigr]^2 + 1
        \Bigr)^{1/2}}, &  x \in [0, L/2), \\
      \dfrac{ - (L - x) \Bigl(d_{\max} - \bigl( ( L - x)^2 + 1 \bigr)^{1/2}
        \Bigr)}{\bigl((L - x)^2 + 1\bigr)^{1/2} \Bigl( \bigl[
        d_{\max} - \bigl( (L - x)^2 + 1 \bigr)^{1/2} \bigr]^2 + 1
        \Bigr)^{1/2}}, & x \in [L/2, L).
    \end{cases}
  \end{equation*}
  In particular, it is continuous at $x = L/2$ and $x = 0$ (viewed
  as a periodic function on $[0, L)$). The expression also verifies
  that
  \begin{equation*}
    \biggl\lvert \frac{\partial d_{L}(x, 0)}{\partial x} \biggr\rvert \leq 1. 
  \end{equation*}

  To calculate the second order derivative, denote
  $\phi(t) = t / (t^2 + 1)^{1/2}$ and we write
  \begin{equation*}
    \frac{\partial d_{L}(x, 0)}{\partial x} = 
    \begin{cases}
      \phi(x) \phi\Bigl(d_{\max} - \bigl( x^2 + 1
      \bigr)^{1/2}\Bigr), &  x \in [0, L/2), \\
      - \phi(L - x) \phi\Bigl(d_{\max} - \bigl( ( L - x)^2 + 1
      \bigr)^{1/2} \Bigr), & x \in [L/2, L).
    \end{cases}
  \end{equation*}
  Hence, 
  \begin{equation*}
    \frac{\partial^2 d_{L}(x, 0)}{\partial x^2} = 
    \begin{cases}
      \begin{split}
        & \phi'(x) \phi\Bigl(d_{\max} - \bigl( x^2 + 1
        \bigr)^{1/2}\Bigr) \\
        & \qquad - \phi^2(x) \phi'\Bigl(d_{\max} - \bigl( x^2 + 1
        \bigr)^{1/2}\Bigr), 
      \end{split}
      &  x \in [0, L/2), \\
      \begin{split}
        & \phi'(L - x) \phi\Bigl(d_{\max} - \bigl( ( L - x)^2 + 1
        \bigr)^{1/2} \Bigr) \\
        & \qquad - \phi^2(L - x) \phi'\Bigl(d_{\max} - \bigl( ( L - x)^2
        + 1 \bigr)^{1/2} \Bigr),
      \end{split} & x \in [L/2, L).
    \end{cases}
  \end{equation*}
  Since $\phi'(t) = (t^2 + 1)^{-3/2}$, it is clear that the second order
  derivative is uniformly bounded. To check the continuity, it suffices to
  check $x = L/2$ and $x = 0$. We have 
  \begin{equation*}
    \begin{aligned}
      & \lim_{x \to 0 +} \frac{\partial^2 d_{L}(x, 0)}{\partial
        x^2} = \phi'(0) \phi(d_{\max} - 1) =
      \lim_{x \to L-}  \frac{\partial^2 d_{L}(x, 0)}{\partial x^2}, \\
      & \lim_{x \to L/2 -} \frac{\partial^2 d_{L}(x, 0)}{\partial
        x^2} = - \phi^2(L/2) \phi(0) = \lim_{x \to L/2 +} \frac{\partial^2
        d_{L}(x, 0)}{\partial x^2}, 
    \end{aligned}
  \end{equation*}
  where the second line uses that $d_{\max} = (L^2/4 + 1)^{1/2} =
  \lim_{x \to L/2} (x^2+1)^{1/2}$. 
\end{proof}

In order to prove Theorem~\ref{thm:fdexp}, we need 
the discrete version of the
Leibniz rule in the finite difference discretization. For any
$x\in\mc{X}$,
\begin{equation}
  \begin{split}
    \mc{D}^- (f g)(x) &= \frac{1}{\Delta x} \bigl( (fg)(x) - (fg)(x - \Delta x) \bigr) \\
    &= \frac{1}{\Delta x} \bigl(f(x) - f(x - \Delta x)\bigr) g(x)
    + \frac{1}{\Delta x} f(x - \Delta x) \bigl( g(x) - g(x - \Delta x)\bigr) \\
    &= \bigl( (\mc{D}^- f) g \bigr)(x) + f(x - \Delta x)
    \bigl(\mc{D}^- g\bigr) (x).
  \end{split}
  \label{eq:leibniz}
\end{equation}

\begin{theorem}\label{thm:fdexp}
  Assume that $(\lambda - H)^{-1}$ is bounded in the matrix 2-norm,
  then there exist constants $\gamma_0>0$ and $C$ such that for any
  $\Delta x \leq 1$, $L \geq 1$, and $\gamma \leq \gamma_0$,
  \begin{equation*}
    \sup_{y \in \mc{X}} \, \bigl\lVert 
    \exp(\gamma d_L(\cdot, y)) 
    (\lambda - H)^{-1} 
    \exp(-\gamma d_L(\cdot, y))  \bigr\rVert_{\mc{L}(L^2(\mc{X}))} \leq
    C, 
  \end{equation*}
  where $\exp(-\gamma d_L(\cdot, y))$ is understood as a
  multiplication operator: 
  \begin{equation*}
    \bigl( \exp(-\gamma d_L(\cdot, y)) f \bigr)(x) = 
    \exp(-\gamma d_L(x, y)) f(x), \quad x\in\mc{X}.
  \end{equation*}
  The definition for $\exp(\gamma d_L(\cdot, y))$ is similar.
\end{theorem}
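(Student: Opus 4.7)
The plan is to carry out a discrete analogue of the Combes--Thomas conjugation argument. Set $\rho(x) := d_L(x,y)$ and let $M_{\pm\gamma}$ denote multiplication by $\exp(\pm\gamma\rho)$, so that
\[
M_\gamma(\lambda - H)M_{-\gamma} = \lambda - H_\gamma, \qquad H_\gamma := M_\gamma H M_{-\gamma}.
\]
The conclusion of the theorem is equivalent to uniform boundedness of $(\lambda - H_\gamma)^{-1}$, and I would deduce this from the factorization $\lambda - H_\gamma = (I - K_\gamma (\lambda - H)^{-1})(\lambda - H)$, where $K_\gamma := H_\gamma - H$, together with the estimate $\|K_\gamma (\lambda - H)^{-1}\|_{\mc{L}(L^2(\mc{X}))} \le C\gamma$. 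Once this is in hand, a Neumann series for $\gamma_0$ small enough ($C\gamma_0 < 1$) gives $\|(\lambda - H_\gamma)^{-1}\| \le 2 \|(\lambda - H)^{-1}\|$, which is exactly the desired bound.

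The central calculation is an explicit formula for $K_\gamma$. Because $V$ commutes with $M_{\pm\gamma}$, $K_\gamma = M_\gamma \mc{D}^+\mc{D}^- M_{-\gamma} - \mc{D}^+\mc{D}^-$, and expanding directly from the three-point stencil gives
\[
(K_\gamma f)(x) = \frac{e^{\gamma(\rho(x)-\rho(x+\Delta x))}-1}{\Delta x^2}f(x+\Delta x) + \frac{e^{\gamma(\rho(x)-\rho(x-\Delta x))}-1}{\Delta x^2}f(x-\Delta x).
\]
Writing $f(x\pm\Delta x) = f(x) \pm \Delta x\,(\mc{D}^\pm f)(x)$ reorganizes this as $K_\gamma = c_0 + c_+ \mc{D}^+ + c_- \mc{D}^-$ for multiplication operators $c_0, c_\pm$. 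A second-order Taylor expansion of $\rho$ (valid thanks to Lemma~\ref{lem:dl}) together with a first-order expansion of the exponential yields $\|c_\pm\|_\infty \le C\gamma$ and $\|c_0\|_\infty \le C\gamma$, uniformly in $\Delta x \le 1$, $L$, and $y$. The crucial point is that the two leading contributions $\pm\gamma\Delta x\,\rho'(x)$ cancel in $c_0$, leaving an $O(\gamma\,\rho''(\cdot))$ remainder; without this cancellation and the uniform bound on $\rho''$ from Lemma~\ref{lem:dl}, the naive bound $|\alpha_\pm|/\Delta x^2 = O(\gamma/\Delta x)$ would blow up as $\Delta x \to 0$. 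Extracting this cancellation carefully is the main technical obstacle.

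To turn these pointwise bounds into an operator bound on $K_\gamma (\lambda - H)^{-1}$, I need a discrete elliptic regularity estimate $\|\mc{D}^\pm (\lambda - H)^{-1}\|_{\mc{L}(L^2(\mc{X}))} \le C$. This follows by summation by parts: for $u = (\lambda - H)^{-1} f$, the identity $(\mc{D}^+\mc{D}^- u, u) = -\|\mc{D}^- u\|_2^2$ together with $\mc{D}^+\mc{D}^- u = Hu - Vu$ gives
\[
\|\mc{D}^- u\|_2^2 = -(f,u) + \lambda\|u\|_2^2 - (Vu, u),
\]
and Cauchy--Schwarz combined with $\|V\|_\infty < \infty$ and the assumed bound $\|u\|_2 \le C\|f\|_2$ yields $\|\mc{D}^- u\|_2 \le C\|f\|_2$; translation invariance handles $\mc{D}^+$. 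Combining this with $\|c_0\|_\infty, \|c_\pm\|_\infty \le C\gamma$ produces $\|K_\gamma (\lambda - H)^{-1}\| \le C\gamma$, and the Neumann-series factorization above concludes the proof, with all constants independent of $y$, $L$, and $\Delta x$.
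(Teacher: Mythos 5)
Your proposal is correct and follows essentially the same route as the paper: conjugate by $e^{\pm\gamma d_L}$, decompose the resulting perturbation of $\lambda-H$ into a zeroth-order multiplication operator plus first-order difference operators with $O(\gamma)$ coefficients (the cancellation of the $\pm\gamma\,\Delta x\,\rho'$ terms that you highlight in $c_0$ is exactly what the paper's double-integral bound on $\mc{D}^+\mc{D}^- e^{-\gamma d_L}$ captures, both resting on Lemma~\ref{lem:dl}), and close with a Neumann series. The only substantive difference is your proof of the discrete elliptic regularity bound $\lVert \mc{D}^{\pm}(\lambda-H)^{-1}\rVert \le C$ by summation by parts, where the paper instead factors through $(1-\Delta)^{-1}$ and verifies the uniform boundedness of the Fourier symbol of $\mc{D}^-(1-\Delta)^{-1}$; both arguments are valid and uniform in $\Delta x$, $L$, and $y$.
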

\begin{proof}
  Notice first that 
  \begin{equation*}
    \exp(\gamma d_L(\cdot, y)) (\lambda - H)^{-1} \exp(-\gamma d_L(\cdot, y))
    = \bigl[ \exp(\gamma d_L(\cdot, y)) (\lambda - H) \exp(-\gamma
    d_L(\cdot, y)) \bigr]^{-1}.
  \end{equation*}
  Using the definition of $H$, we get
  \begin{equation*}
    \begin{aligned}
      \exp(\gamma d_L(\cdot, y)) (\lambda - H) \exp(-\gamma
      d_L(\cdot, y)) & = \exp(\gamma d_L(\cdot, y))
      (\lambda - V) \exp(-\gamma d_L(\cdot, y)) \\
      & \qquad + \exp(\gamma d_L(\cdot, y)) \Delta \exp(-\gamma
      d_L(\cdot, y)) \\
      & = (\lambda - V) + \exp(\gamma d_L(\cdot, y)) \Delta \exp(-\gamma
      d_L(\cdot, y))
    \end{aligned}
  \end{equation*}
  Explicit calculation using \eqref{eq:leibniz} for $\mc{D}^-$ and
  analogously for $\mc{D}^+$, we obtain
  \begin{equation*}
    \begin{aligned}
      \exp(\gamma d_L(\cdot, y)) \Delta & \exp(-\gamma d_L(\cdot,
      y)) = \exp(\gamma d_L(\cdot, y)) \mc{D}^+ \mc{D}^- \exp(-\gamma d_L(\cdot, y)) \\
      & = \Delta + \exp(\gamma d_L(\cdot, y)) \bigl[
      \mc{D}^- \exp(-\gamma d_L(\cdot, y)) \bigr] \mc{D}^- \\
      & \qquad +  \exp(\gamma d_L(\cdot, y)) \bigl[
      \mc{D}^+ \exp(-\gamma d_L(\cdot, y)) \bigr] \mc{D}^+ \\
      & \qquad + \exp(\gamma d_L(\cdot, y)) \bigl[ \Delta \exp(-\gamma
      d_L(\cdot, y)) \bigr].
    \end{aligned}
  \end{equation*}
  To control the lower order terms on the right hand side, we estimate
  \begin{multline*}
    \Bigl\lvert \bigl[ \mc{D}^+ \exp(-\gamma d_L(\cdot, y)) \bigr](x)
    \Bigr\rvert = \Bigl\lvert \frac{1}{\Delta x} \bigl[ e^{-\gamma
      d_L(x + \Delta x, y)}
    - e^{-\gamma d_L(x, y)} \bigr] \Bigr\rvert \\
    \leq \max_{t \in [0, \Delta x]} \Bigl\lvert
    \frac{\partial}{\partial t} e^{-\gamma d_L(x + t, y)} \Bigr\rvert
    \leq C \max \bigl( \gamma e^{-\gamma d_L(x, y)}, \gamma e^{-\gamma
      d_L(x+\Delta x, y)} \bigr) \leq C \gamma e^{\gamma \Delta x}
    e^{-\gamma d_L(x, y)},
  \end{multline*}
  where the first inequality follows from the mean value theorem, and the
  second inequality uses that $\abs{\partial_x d_L(x, y)}$ is
  uniformly bounded from Lemma~\ref{lem:dl}. The same bound also holds
  for $\mc{D}^- \exp(-\gamma d_L(\cdot, y))$.  For the second order
  difference
  \begin{equation*}
    \begin{aligned}
      \Bigl\lvert \bigl[ \mc{D}^+ \mc{D}^- \exp(-\gamma d_L(\cdot,
      y)) \bigr] (x) \Bigr\rvert 
      & = \biggl\lvert \frac{1}{\Delta x^2} \bigl[
      e^{-\gamma d_L(x + \Delta x, y)} - 2 e^{-\gamma d_L(x, y)} +
      e^{-\gamma d_L(x - \Delta x, y)} \bigr] \biggr\rvert \\
& = \frac{1}{\Delta x^2} \biggl \lvert
      \int_0^{\Delta x} \int_0^{\Delta x}
      \partial_t \partial_s \, e^{- \gamma d_L(x + (t - s), y)} \ud s \ud t \biggr\rvert \\
      & \leq \max_{(t, s) \in [0, \Delta x]^2} \bigl \lvert \partial_t \partial_s \, e^{- \gamma d_L(x + (t - s), y)} \bigr \rvert \\
      & \leq C (\gamma^2 + 2 \gamma ) e^{\gamma \Delta x} e^{-\gamma
        d_L(x, y)},
    \end{aligned}  
  \end{equation*}
  where we have used Lemma~\ref{lem:dl} in the last inequality.

  We thus have in summary for $\gamma$ sufficiently small (recall that
  $\Delta x \leq 1$)
  \begin{align*}
    & \norm{f^-}_{L^{\infty}(\mc{X})} := \bigl\lVert \exp(\gamma d_L(\cdot, y)) \bigl[ \mc{D}^-  \exp(-\gamma d_L(\cdot, y)) \bigr]
      \bigr\rVert_{L^{\infty}(\mc{X})}
      \leq C \gamma, \\
    & \norm{f^+}_{L^{\infty}(\mc{X})} := \bigl\lVert \exp(\gamma d_L(\cdot, y)) \bigl[ \mc{D}^+
      \exp(-\gamma d_L(\cdot, y)) \bigr]
      \bigr\rVert_{L^{\infty}(\mc{X})}
      \leq C \gamma, \\
    & \norm{g}_{L^{\infty}(\mc{X})} := \bigl\lVert \exp(\gamma d_L(\cdot, y)) \bigl[ \Delta
      \exp(-\gamma d_L(\cdot, y)) \bigr]
      \bigr\rVert_{L^{\infty}(\mc{X})} \leq C \gamma, 
   \end{align*}
   where we have introduced the short hand notation $f^{\pm}$ and
   $g$.   
  
   \smallskip 

   Recall the identity
   \begin{equation}\label{eq:neumann}
     \begin{aligned}
       \exp(\gamma d_L(\cdot, y)) & (\lambda - H) \exp(-\gamma
       d_L(\cdot, y)) \\
       & = (\lambda - V) + \Delta + 
       \bigl( f^{-} \mc{D}^- + f^{+} \mc{D}^+ \bigr) + g \\
       & = ( \lambda - H) + \bigl( f^{-} \mc{D}^- +
       f^{+} \mc{D}^+ \bigr) + g \\
       & = ( \lambda - H) \biggl[ I + \bigl( f^{-}
       \mc{D}^- + f^{+} \mc{D}^+ \bigr) (\lambda - H)^{-1} + g (\lambda - H)^{-1}
       \biggr]. 
     \end{aligned}
   \end{equation}
   Note that 
   \begin{equation}
     \begin{aligned}
       \norm{\mc{D}^- ( \lambda - H)^{-1}}_{\mc{L}(L^2(\mc{X}))} &
       \leq
       \norm{ \mc{D}^{-} (1 - \Delta)^{-1}}_{\mc{L}(L^2(\mc{X}))} \norm{(1 - \Delta) (\lambda - H)^{-1}}_{\mc{L}(L^2(\mc{X}))} \\
       & \leq C \norm{(1+\lambda-V-\lambda + H) (\lambda -
         H)^{-1}}_{\mc{L}(L^2(\mc{X}))}
       \\
       & \leq C \left(1 +
       (\abs{1+\lambda}+\norm{V}_{L^{\infty}(\mc{X})})\right)
       \norm{(\lambda - H)^{-1}}_{\mc{L}(L^2(\mc{X}))},
     \end{aligned}
     \label{eqn:bounddminus}
   \end{equation}
   and the same bound for $\mc{D}^+ (\lambda - H)^{-1}$. Here we
     have used the fact that $\norm{ \mc{D}^{-} (1 -
       \Delta)^{-1}}_{\mc{L}(L^2(\mc{X}))}$ is bounded uniformly with
     respect to $\Delta x \leq 1$, which can be directly verified by
     Fourier representation. 
     Thus by making
   $\gamma$ sufficiently small, the bounds on $f^{\pm}$ and $g$
   guarantee the invertibility of the last term on the right hand side
   of \eqref{eq:neumann} and the inverse is also bounded. The theorem
   is hence proved.
\end{proof}

As a corollary to Theorem~\ref{thm:fdexp}, we may infer the pointwise
decay property of the Green's function. Let us consider without loss
of generality a single column $g$ of the discretized Green's function,
which solves the equation
\begin{equation}
  ( \lambda - H) g = \frac{1}{\Delta x} e_1 
  \label{eqn:gfd}
\end{equation}
with $e_1 = (1, 0, 0, \ldots, 0)^{T}$.  Here the prefactor $1/\Delta
x$ on the right hand side of Eq.~\eqref{eqn:gfd} reflects the
normalization of the discrete Dirac $\delta-$distribution.  Thus $g =
\frac{1}{\Delta x} (\lambda - H)^{-1} e_1$. We estimate the
exponential decay rate of $g$ (in $L^2$ sense) according to
\begin{multline}
  \norm{e^{\gamma d_L(0, \cdot )} g}_2  =  \norm{e^{\gamma d_L(0, \cdot)} (\lambda - H)^{-1} \frac{e_1}{\Delta x} }_2
    =  \norm{e^{\gamma d_L(0, \cdot)} (\lambda - H)^{-1} (1 - \Delta) (1 - \Delta)^{-1} \frac{e_1}{\Delta x}}_2 \\
    \leq \norm{e^{\gamma d_L(0, \cdot)} (\lambda - H)^{-1} (1 -
    \Delta) e^{-\gamma d_L(0, \cdot)}}_{\mc{L}(L^2(\mc{X}))}
     \norm{e^{\gamma
        d_L(0, \cdot)} (1 - \Delta)^{-1} \frac{e_1}{\Delta x} }_2. 
  \label{eqn:boundgfd}
\end{multline}
The right hand side of~\eqref{eqn:boundgfd} is bounded because of the following two facts.
First, similar to Eq.~\eqref{eqn:bounddminus},
\begin{multline}
  \norm{e^{\gamma d_L(0, \cdot)} (\lambda - H)^{-1} (1 - \Delta) e^{-\gamma d_L(0, \cdot)}}_{\mc{L}(L^2(\mc{X}))} \\
  = \norm{e^{\gamma d_L(0, \cdot)} (\lambda - H)^{-1} (-(\lambda
    -H) + (\lambda + 1) - V) e^{-\gamma d_L(0, \cdot)}}_{\mc{L}(L^2(\mc{X}))} \\
  \leq 1 + \bigl( \abs{1+\lambda} + \norm{V}_{L^{\infty}(\mc{X})}
  \bigr) \norm{e^{\gamma d_L(0, \cdot)} (\lambda - H)^{-1} e^{-\gamma
      d_L(0, \cdot)}}_{\mc{L}(L^2(\mc{X}))}.
\end{multline}
Second, $ \norm{e^{\gamma d_L(0, \cdot)} (1 - \Delta)^{-1} \frac{e_1}{\Delta
    x} }_2$
is bounded for sufficiently small $\gamma$, which can be verified by a
direct calculation using the explicit discrete Green's function
for $(1 - \Delta)^{-1}$ of Yukawa type. 
\REV{Moreover, away from $x_1$ (where the center of $e_1$ is located), local $L^{\infty}$ bounds can be obtained from the $L^2$ estimate combined with elliptic regularity estimates for the finite difference equation (see e.g., \cite{ThomeeWestergren:68}).} 
In summary, this establishes the exponential moment bound for $g$
uniform in $L$ and the discretization mesh size, thus, the Green's
function decays exponentially along the off-diagonal direction.

\section{Pseudo-spectral method and mollified pseudo-spectral
method}\label{sec:ps}

In this section we consider the pseudo-spectral type discretization.
When the potential function $V$ is smooth, pseudo-spectral
discretization is widely used in scientific and engineering
computations. This is because pseudo-spectral type discretization gives
rise to much more accurate solution than low order finite difference
type discretization with the same number of degrees of freedom. 

In pseudo-spectral type discretization, corresponding to the discrete
lattice $\mc{X}$ we define the
Fourier grid $\mc{K}=\left\{ n\Delta k \mid n=-\frac{N}{2}+1,\ldots,
\frac{N}{2} \right\}$. Here $\Delta k=\frac{2\pi}{L}$, 
the \emph{edge} of the Fourier grid is defined to be 
\begin{equation}
  k_{c}:=\frac{N}{2}\Delta k=\frac{\pi N}{L} = \frac{\pi}{\Delta x}.
\end{equation}
Note that $k_c \ge \pi$ due to the assumption $\Delta x \le 1$.

For a lattice function $f: \mc{X} \to \RR$, its discrete
Fourier transform is defined as
\[
\wh{f}_{k} = \Delta x \sum_{x\in \mc{X}} e^{-\I k x} f(x), \quad k \in \mc{K}.
\] 
The corresponding inverse discrete Fourier transform is 
\[
f(x) = \frac{1}{L} \sum_{k \in \mc{K}} e^{\I k x} \wh{f}_k, \quad
x\in\mc{X}.
\]
Here the normalization factor is chosen so that when the grid spacing
$\Delta x\to 0$, the discrete Fourier transform and inverse Fourier transform
converges to the continuous Fourier transform and inverse Fourier
transform, respectively.  

Similar to Eq.~\eqref{eqn:normL2} and~\eqref{eqn:normLinf}, in Fourier
space, the discrete $L^{2}(\mc{K})$ norm and $L^{\infty}(\mc{K})$ norm for
$\{\wh{f}_{k}\}$ is given as 
\begin{equation}
  \norm{\wh{f}}^2_{L^{2}(\mc{K})} = \Delta k \sum_{k\in \mc{K}} \abs{\wh{f}_{k}}^2, \quad
  \norm{\wh{f}}_{L^{\infty}(\mc{K})} = \max_{k\in \mc{K}} \abs{\wh{f}_{k}},
  \label{eqn:normfourier}
\end{equation}
respectively. 
Again for simplicity of the notation, we will use $\norm{\wh{f}}_{2}$ and
$\norm{\wh{f}}_{\infty}$ interchangeably with $\norm{\wh{f}}_{L^{2}(\mc{K})}$ and
$\norm{\wh{f}}_{L^{\infty}(\mc{K})}$, respectively, unless otherwise
clarified.

Under this choice of normalization, the discrete Parseval's identity
reads
\begin{equation}\label{eq:parseval}
  \begin{aligned}
    \norm{\wh{f}}^2_2 &= \Delta k \sum_{k \in \mc{K}} \abs{\wh{f}_k}^2 
    = \Delta k (\Delta x)^2 \sum_{k \in \mc{K}} \sum_{x, x'\in\mc{X}} e^{-\I
    k(x-x')} f(x) f^{\ast}(x') \\
    & = \Delta k N (\Delta x)^2 \sum_{x\in\mc{X}} \abs{f(x)}^2 = 2\pi
    \norm{f}_2^2.
  \end{aligned}
\end{equation}

We define the Fourier restriction operator $\mc{R}_{N}:L^{2}(\Omega)\to
L^{2}(\mc{X})$ as
\[
(\mc{R}_{N}g(\cdot))(x) = g(x), \quad x\in \mc{X}.
\]
Similarly the Fourier interpolation operator $\mc{I}_{N}:
L^{2}(\mc{X})\to L^{2}(\Omega)$ is defined as
\[
[\mc{I}_{N}f](x) = \frac{1}{L}\sum_{k\in \mc{K}} \wh{f}_{k} e^{\I k x},
\quad x\in \Omega.
\]
Using the Fourier restriction and interpolation operator, the
Laplacian operator in the pseudo-spectral discretization 
becomes $\mc{R}_{N} \Delta \mc{I}_{N}$. For simplicity we consider the
case in the absence of the external potential i.e. $V(x)=0$, and
$\lambda=-1$. In this case, the pseudo-spectral discretization is
equivalent to the spectral discretization, and Eq.~\eqref{eqn:problem} becomes 
\begin{equation}
  (1 - \mc{R}_{N} \Delta \mc{I}_{N}) G = -\frac{1}{\Delta x} I.  
  \label{eqn:psdiscretize}
\end{equation}
Again the prefactor $1/\Delta x$ on the right hand side of
Eq.~\eqref{eqn:psdiscretize}
reflects the normalization of the discrete Dirac $\delta-$distribution.
Since $\mc{R}_{N} \Delta \mc{I}_{N}$ is translational invariant, without
loss of generality we only consider the first column of $G$, denoted by
$g$. Then
\begin{equation}
  (1 - \mc{R}_{N} \Delta \mc{I}_{N}) g = -\frac{1}{\Delta x} e_{1},
  \label{eqn:psdiscrete2}
\end{equation}
where $e_{1}=(1,0,\ldots,0)^{T}$. 
Direct computation shows that
\[
\wh{g}_{k} = -\frac{1}{1+k^2}, \quad k\in \mc{K}.
\]

Below we would like to utilize the discrete version of the relation
between the regularity of the Fourier space and the decay in the real
space. This allows us to obtain the decay properties of $g$ by
estimating the norm of $\wh{g}$ and its discrete derivatives.
Let us first note an elementary calculus lemma. 
\begin{lemma}
  Let $\abs{x}\in \left[
  0,\frac{L}{2} \right]$. Then
  \[
    \frac{\abs{e^{\I \Delta k x}-1}}{\Delta k} \ge \frac{2
    \abs{x}}{\pi}. 
  \]
  \label{lem:xbound}
\end{lemma}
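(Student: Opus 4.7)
The plan is to reduce the claim to Jordan's inequality after expressing the numerator in terms of a sine.

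First I would use the standard identity
\[
\abs{e^{\I \theta}-1}^{2} = (\cos\theta-1)^{2}+\sin^{2}\theta = 2 - 2\cos\theta = 4\sin^{2}(\theta/2),
\]
so that $\abs{e^{\I \theta}-1} = 2\abs{\sin(\theta/2)}$. Applying this with $\theta = \Delta k\, x$ converts the left-hand side into
\[
\frac{\abs{e^{\I \Delta k x}-1}}{\Delta k} = \frac{2\abs{\sin(\Delta k\, x/2)}}{\Delta k}.
\]

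Next I would check that the argument of the sine lies in the regime where Jordan's inequality is applicable. Since $\Delta k = 2\pi/L$ and $\abs{x}\le L/2$ by assumption, we have $\abs{\Delta k\, x/2} \le \pi/2$. On this interval Jordan's inequality gives $\abs{\sin u}\ge \frac{2}{\pi}\abs{u}$, so
\[
2\abs{\sin(\Delta k\, x/2)} \;\ge\; 2\cdot \frac{2}{\pi}\cdot \frac{\Delta k\,\abs{x}}{2} \;=\; \frac{2\Delta k\,\abs{x}}{\pi}.
\]
Dividing through by $\Delta k$ yields the stated bound.

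There is no real obstacle here; the only thing to be careful about is verifying that the range of $x$ keeps $\Delta k\, x/2$ inside $[-\pi/2,\pi/2]$ so that Jordan's inequality can be invoked, which is precisely guaranteed by the hypothesis $\abs{x}\le L/2$. The lemma is therefore a direct two-line consequence of the half-angle identity and Jordan's inequality.
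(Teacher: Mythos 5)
Your proof is correct and follows exactly the paper's argument: the half-angle identity $\abs{e^{\I\theta}-1}=2\abs{\sin(\theta/2)}$ combined with Jordan's inequality $\sin y\ge \frac{2}{\pi}y$ on $[0,\pi/2]$, which applies because $\abs{\Delta k\, x}/2\le \pi/2$. No differences worth noting.
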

\begin{proof}
  Note that $\frac{\abs{\Delta k x}}{2} \le \frac{\pi}{2}$, and
  $\sin y \ge \frac{2}{\pi} y$ for $0\le y\le \frac{\pi}{2}$, we have
  \[
  \frac{\abs{e^{\I \Delta k x}-1}}{\Delta k}
  = \frac{2\abs{\sin\left( \frac{\Delta k x}{2} \right)}}{\Delta k}
  \ge
  \frac{2 \abs{x}}{\pi}.
  \]
\end{proof}

We define the difference operator $\mc{D}$ acting on a vector
$\wh{f}$ in the Fourier domain as
\begin{equation}
  (\mc{D} \wh{f})_{k} =
    \frac{\wh{f}_{k}-\wh{f}_{k-\Delta k}}{\Delta k}, \quad k\in \mc{K}.
  \label{eqn:Doperator}
\end{equation}
Eq.~\eqref{eqn:Doperator} is interpreted in the periodic sense, i.e.
for $k_{-\frac{N}{2}+1}$, $k_{-\frac{N}{2}+1}-\Delta k \equiv 
k_{\frac{N}{2}}$. Proposition~\ref{thm:discretedecay} characterizes the
decay property of $g$ in terms of the first order difference of
$\wh{g}$.

\begin{prop}
  Define 
  \[
  d(x,0) = \begin{cases}
    x, & x\in [0,L/2),\\
    L-x, & x\in [L/2,L)
  \end{cases}
  \]
  then for any $g\in L^{2}(\mc{X})$,
  \[
  \norm{d(\cdot,0) g}_{2} \leq 
  \frac{\sqrt{\pi}}{2\sqrt{2}}
  \norm{\mc{D}\wh{g}}_{2}.
  \]
  \label{thm:discretedecay}
\end{prop}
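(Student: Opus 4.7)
The plan is to exploit the discrete Fourier duality between the difference operator $\mc{D}$ in frequency and multiplication by (essentially) the periodic coordinate $d(\cdot,0)$ in space, and then invoke Lemma~\ref{lem:xbound} together with the discrete Parseval identity~\eqref{eq:parseval}.

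\emph{Step 1 (Fourier identity via summation by parts).} First I would write out the inverse Fourier transform of $\mc{D}\wh{g}$. Using the definition of $\mc{D}$ and reindexing the second sum by $k\mapsto k+\Delta k$, which is permissible because $\wh{g}$ is treated periodically on the Fourier grid $\mc{K}$ (as explicitly specified after Eq.~\eqref{eqn:Doperator}), the terms can be combined to produce the multiplier $(1-e^{\I \Delta k x})/\Delta k$. Concretely, for $x\in\mc{X}$,
\begin{equation*}
  \frac{1}{L}\sum_{k\in\mc{K}} (\mc{D}\wh g)_k\, e^{\I k x}
  \;=\; \frac{1-e^{\I \Delta k x}}{\Delta k}\,g(x).
\end{equation*}
This is the discrete analogue of the classical identity ``derivative in frequency equals multiplication by $-\I x$ in space''.

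\emph{Step 2 (Pointwise lower bound on the multiplier).} Next I would turn the identity into an inequality. For $x\in[0,L/2)$, Lemma~\ref{lem:xbound} directly gives $|e^{\I\Delta k x}-1|/\Delta k\ge 2x/\pi = 2 d(x,0)/\pi$. For $x\in[L/2,L)$, I would use that $\Delta k\,L=2\pi$, so $e^{\I \Delta k x}=e^{-\I \Delta k (L-x)}$ with $L-x\in(0,L/2]$; applying the lemma with $L-x$ in place of $x$ yields $|e^{\I \Delta k x}-1|/\Delta k\ge 2(L-x)/\pi = 2 d(x,0)/\pi$. Hence in both cases
\begin{equation*}
  d(x,0)\,|g(x)|\;\le\;\frac{\pi}{2}\,
  \biggl|\,\frac{1}{L}\sum_{k\in\mc{K}}(\mc{D}\wh g)_k\,e^{\I k x}\biggr|.
\end{equation*}

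\emph{Step 3 (Parseval and conclusion).} Squaring, multiplying by $\Delta x$, summing over $x\in\mc{X}$, and recognizing the right-hand side as the $L^{2}(\mc{X})$ norm of the inverse Fourier transform of $\mc{D}\wh{g}$, I would apply~\eqref{eq:parseval} in the form $\|f\|_{2}^{2}=\tfrac{1}{2\pi}\|\wh f\|_{2}^{2}$ to convert it to $\tfrac{1}{2\pi}\|\mc{D}\wh g\|_{2}^{2}$. Collecting the constants gives
\begin{equation*}
  \|d(\cdot,0)\,g\|_{2}^{2}\;\le\;\frac{\pi^{2}}{4}\cdot\frac{1}{2\pi}\,\|\mc{D}\wh g\|_{2}^{2}\;=\;\frac{\pi}{8}\,\|\mc{D}\wh g\|_{2}^{2},
\end{equation*}
and taking square roots yields the stated constant $\sqrt{\pi}/(2\sqrt{2})$.

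The only real obstacle is bookkeeping the periodicity carefully at two places: in Step~1 when shifting the index of summation against the periodic convention for $\mc{D}$ (so that no boundary terms appear), and in Step~2 when splitting $\mc{X}$ into the two halves $[0,L/2)$ and $[L/2,L)$ so that Lemma~\ref{lem:xbound}'s hypothesis $|x|\le L/2$ applies after folding by the period. Once those are handled, the remainder is an application of Parseval and a constant chase.
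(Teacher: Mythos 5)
Your proof is correct and follows essentially the same route as the paper's: the summation-by-parts identity turning $\mc{D}$ in frequency into the multiplier $(1-e^{\I\Delta k x})/\Delta k$ in space, the lower bound from Lemma~\ref{lem:xbound}, and Parseval with the same constant chase. Your Step~2, which folds $x\in[L/2,L)$ to $L-x$ via $\Delta k\,L=2\pi$ before invoking the lemma, is in fact spelled out more carefully than in the paper, which handles this implicitly.
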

\begin{proof}
  For any $x\in\mc{X}$, since $0\le d(x,0)\le \frac{L}{2}$, using
  Lemma~\ref{lem:xbound}, 
  \[
  \abs{d(x,0) g(x)} \le \frac{\pi}{2} \frac{\abs{(e^{\I \Delta k
  x}-1)g(x)}}{\Delta k}.
  \]
  Since
  \[
  g(x) = \frac{1}{L} \sum_{k\in \mc{K}} e^{\I k x} \wh{g}_{k},
  \]
  we have
  \[
  (e^{\I \Delta k x}-1) g(x) = \frac{1}{L} \sum_{k\in \mc{K}} e^{\I (k+\Delta
    k) x} \wh{g}_{k} - \frac{1}{L} \sum_{k\in \mc{K}} e^{\I k x}
    \wh{g}_{k}.
  \]
  Rearrange the terms and use the definition of
  Eq.~\eqref{eqn:Doperator}, we have
  \[
  \frac{e^{\I \Delta k x}-1}{\Delta k} g(x) = 
  -\frac{1}{L} \sum_{k\in \mc{K}} e^{\I k x}  (D\wh{g})_{k}.
  \]
  Summing up over all $x\in\mc{X}$, we obtain
  \[
  \Delta x\sum_{x\in\mc{X}}\abs{\frac{e^{\I \Delta k x}-1}{\Delta k}
  g(x)}^2 
  = \frac{1}{L}
  \sum_{k}\abs{(D\wh{g})_{k}}^2,
  \]
  and therefore
  \[
  \norm{d(\cdot,0) g}_{2} \le \frac{\sqrt{\pi}}{2\sqrt{2}}
  \norm{\mc{D}\wh{g}}_{2}.
  \]
\end{proof}

Applying Proposition~\ref{thm:discretedecay} repeatedly for $m$ times, we
have
\begin{corollary}
  \label{cor:discretedecaym}
  For any $g\in L^{2}(\mc{X})$ and positive integer $m$,
  \[
  \norm{d(\cdot,0)^{m} g}_{2} \le \left(\frac{\pi}{2}\right)^{m} \frac{1}{\sqrt{2\pi}}
  \norm{\mc{D}^{(m)}\wh{g}}_{2}.
  \]
\end{corollary}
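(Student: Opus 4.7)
The plan is to argue by induction on $m$, with Proposition~\ref{thm:discretedecay} serving both as the base case and as the engine at each step. The base case $m=1$ is Proposition~\ref{thm:discretedecay} itself, and the constants match because $\frac{\sqrt{\pi}}{2\sqrt{2}}=\frac{\pi}{2}\cdot\frac{1}{\sqrt{2\pi}}$.

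The key observation for the inductive step is already implicit in the proof of Proposition~\ref{thm:discretedecay}: the identity
\[
\frac{e^{\I\Delta k\,x}-1}{\Delta k}\,g(x)=-\frac{1}{L}\sum_{k\in\mc{K}}e^{\I kx}(\mc{D}\wh g)_{k}
\]
derived there says, after taking the inverse discrete Fourier transform, that pointwise multiplication by $\mu(x):=\frac{e^{\I\Delta k\,x}-1}{\Delta k}$ on the real-space side is Fourier-dual to the operator $-\mc{D}$ on the frequency side, i.e.\ $\widehat{\mu g}_{k}=-(\mc{D}\wh g)_{k}$. I would set $h(x):=\mu(x)g(x)$; since $\mu$ is bounded on $\mc{X}$, one has $h\in L^{2}(\mc{X})$ whenever $g$ does, so the induction hypothesis applied at level $m-1$ to $h$ in place of $g$ yields
\[
\|d(\cdot,0)^{m-1}h\|_{2}\le\Bigl(\tfrac{\pi}{2}\Bigr)^{m-1}\tfrac{1}{\sqrt{2\pi}}\|\mc{D}^{(m-1)}\wh h\|_{2}=\Bigl(\tfrac{\pi}{2}\Bigr)^{m-1}\tfrac{1}{\sqrt{2\pi}}\|\mc{D}^{(m)}\wh g\|_{2}.
\]
On the other hand, Lemma~\ref{lem:xbound} gives the pointwise bound $|d(x,0)|\le\tfrac{\pi}{2}|\mu(x)|$, and multiplying by $|d(x,0)|^{m-1}|g(x)|$ yields the pointwise estimate $|d(x,0)^{m}g(x)|\le\tfrac{\pi}{2}|d(x,0)^{m-1}h(x)|$. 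Combining the two displays closes the induction.

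I do not foresee a genuine obstacle here. The only place that demands a moment of care is being explicit about the multiplier/difference duality $\widehat{\mu g}=-\mc{D}\wh g$ before iterating, so that the induction hypothesis is legitimately applicable to $h=\mu g$; this is just a one-line rearrangement of the formula already recorded in the previous proof. Once that is in place the argument reduces to routine bookkeeping of constants.
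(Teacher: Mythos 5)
Your proof is correct and takes essentially the same route as the paper: the paper simply records the $m$-fold identity $\frac{(e^{\I \Delta k x}-1)^{m}}{\Delta k^{m}}\, g(x) = \frac{(-1)^{m}}{L}\sum_{k\in\mc{K}} e^{\I k x}(\mc{D}^{(m)}\wh{g})_{k}$ and invokes the same calculation as in Proposition~\ref{thm:discretedecay}, which is exactly your induction unrolled, with the multiplier/difference duality $\wh{\mu g}=-\mc{D}\wh{g}$ as the iterated step. Your constant bookkeeping (a single factor $1/\sqrt{2\pi}$ from the Parseval-type identity and $m$ factors of $\pi/2$ from Lemma~\ref{lem:xbound}) also matches the stated bound.
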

\begin{proof}
  The proof follows from the identity that for any $x\in\mc{X}$
  \[
  \frac{\left(e^{\I \Delta k x}-1\right)^{m}}{\Delta k} g(x) = 
  \frac{(-1)^{m}}{L}\sum_{k\in \mc{K}} e^{\I k x}  (D^{(m)}\wh{g})_{k},
  \]
  and a similar calculation as in Proposition~\ref{thm:discretedecay}. 
\end{proof}

Corollary~\ref{cor:discretedecaym} suggests that in order to obtain high
order polynomial decay rate, we need to control the high order
derivatives of $\wh{g}$. However, the difficulty associated with the
pseudo-spectral method is that the discrete Laplacian in the Fourier
space is $k^2$ and is not smooth at the edge of the Fourier grid
$k=\pm k_{c}$.
Numerical results in section~\ref{sec:numer} indicate that the
off-diagonal elements of the discretized Green's function from
pseudo-spectral discretization indeed decay slowly in the asymptotic
sense.

Below we demonstrate that it is possible to mollify the pseudo-spectral
scheme which smears the discontinuity near the edge of the Fourier
grid $\pm k_{c}$, and the
resulting discretized Green's function decays faster than
$d(x,0)^{-M}$ along the off-diagonal direction, where $M\sim
\Or(N)$. As a result, as the system size $L$ and hence $N$ increases,
the decay along the off-diagonal direction is super-algebraic, i.e. faster than any
polynomial of $d(x,0)$. 

\medskip 

For pseudo-spectral discretization, the following discrete version of
the Leibniz rule plays an important role.
\begin{lemma}
  For any $\wh{f},\wh{g}\in L^{2}(\mc{K})$, and $k\in\mc{K}$,
  \[
  (\mc{D}[\wh{f}\wh{g}])_{k} = (\mc{D}\wh{f})_{k} \wh{g}_{k-\Delta k} +
  \wh{f}_{k}(\mc{D}\wh{g})_{k}, 
  \]
  and 
  \[
   (\mc{D}[\wh{f}\wh{g}])_{k} = (\mc{D}\wh{f})_{k} \wh{g}_{k} +
   \wh{f}_{k-\Delta k}(\mc{D}\wh{g})_{k}.
  \]
  \label{lem:proddiff}
\end{lemma}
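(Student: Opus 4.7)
The statement is the familiar discrete product rule for a backward finite difference, transplanted to the Fourier grid. The plan is a purely algebraic manipulation: expand $(\mc{D}[\wh{f}\wh{g}])_k$ using the definition \eqref{eqn:Doperator} and then add and subtract a suitable ``mixed'' term to split the numerator into two telescoping pieces.

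More concretely, I would start from
\begin{equation*}
  (\mc{D}[\wh{f}\wh{g}])_k = \frac{\wh{f}_k \wh{g}_k - \wh{f}_{k-\Delta k} \wh{g}_{k-\Delta k}}{\Delta k}.
\end{equation*}
To obtain the first identity, I insert $\pm \wh{f}_k \wh{g}_{k-\Delta k}$ in the numerator, which regroups the expression into
\begin{equation*}
  \frac{\wh{f}_k (\wh{g}_k - \wh{g}_{k-\Delta k})}{\Delta k} + \frac{(\wh{f}_k - \wh{f}_{k-\Delta k}) \wh{g}_{k-\Delta k}}{\Delta k} = \wh{f}_k (\mc{D}\wh{g})_k + (\mc{D}\wh{f})_k \wh{g}_{k-\Delta k}.
\end{equation*}
For the second identity, I instead insert $\pm \wh{f}_{k-\Delta k}\wh{g}_k$ in the numerator, giving
\begin{equation*}
  \frac{(\wh{f}_k - \wh{f}_{k-\Delta k}) \wh{g}_k}{\Delta k} + \frac{\wh{f}_{k-\Delta k}(\wh{g}_k - \wh{g}_{k-\Delta k})}{\Delta k} = (\mc{D}\wh{f})_k \wh{g}_k + \wh{f}_{k-\Delta k}(\mc{D}\wh{g})_k.
\end{equation*}
Both identities should be interpreted in the periodic sense, consistent with the convention stated after \eqref{eqn:Doperator}, so that for $k=k_{-N/2+1}$ the shift $k-\Delta k$ is identified with $k_{N/2}$; this requires no additional argument since the definition of $\mc{D}$ already builds in the periodicity.

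There is no real obstacle here: the proof is a one-line algebraic identity, the discrete analogue of the two equivalent forms of the continuous Leibniz rule $(fg)' = f'g + fg'$ that arise from a non-centered finite difference. The only thing worth flagging explicitly is that, unlike in the continuous case, the two summands are not symmetric in $\wh{f}$ and $\wh{g}$: depending on which mixed term is inserted, either $\wh{f}$ or $\wh{g}$ is evaluated at the shifted index $k-\Delta k$. Both versions are useful in subsequent applications, which is why the lemma records both.
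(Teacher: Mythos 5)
Your proof is correct and is essentially identical to the paper's: both expand $(\mc{D}[\wh{f}\wh{g}])_k$ from the definition and insert the mixed term $\pm\wh{f}_k\wh{g}_{k-\Delta k}$ to telescope, the only cosmetic difference being that the paper obtains the second identity by swapping the roles of $\wh{f}$ and $\wh{g}$ rather than inserting $\pm\wh{f}_{k-\Delta k}\wh{g}_k$. No gaps.
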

\begin{proof} The proof is elementary. 
  \[
  \begin{split}
  (\mc{D}[\wh{f}\wh{g}])_{k} &= \frac{1}{\Delta k} \left(\wh{f}_{k}
  \wh{g}_{k} - \wh{f}_{k-\Delta k}\wh{g}_{k-\Delta k}\right)\\
  &= \frac{1}{\Delta k}\left( (\wh{f}_{k}-\wh{f}_{k-\Delta k})
  \wh{g}_{k - \Delta k} +
  \wh{f}_{k} (\wh{g}_{k}-\wh{g}_{k-\Delta k}) \right)\\
  & =  (\mc{D}\wh{f})_{k} \wh{g}_{k-\Delta k} + \wh{f}_{k}(\mc{D}\wh{g})_{k}.
  \end{split}
  \]
  The second equality follows by switching the role of $f$ and $g$. 
\end{proof}

Let us  introduce a smooth cut-off
function $\wh{\theta}(k)\in C^{\infty}(\mathbb{R})$ which satisfies
\begin{equation}
\wh{\theta}(k) = 
\begin{cases}
  1, & \abs{k} \le \frac12 k_{c},\\
  0, & \abs{k} \ge \frac34 k_{c},
\end{cases}
  \label{eqn:thetacond}
\end{equation}
and $0\le \wh{\theta}(k)\le 1$. 
For example, we can choose $\wh{\theta}$ to be a characteristic function
$\wh{\theta}_{0}(k):=\boldsymbol{1}_{\abs{k} \leq \frac{5}{8} k_c}$
convolved with a ``bump'' function $\wh{\varphi}(k)$, i.e.
\begin{equation}
  \wh{\theta}(k) = \int \wh{\varphi}(k-k') \wh{\theta}_{0}(k')\ud k',
  \label{eqn:theta}
\end{equation}
and
\begin{equation}
  \wh{\varphi}(k)=\begin{cases}
    Z \exp\left( -\frac{\sigma^2 k_c^2}{\sigma^2 k_c^2-k^2} \right), &
    \abs{k}<\sigma k_{c},\\
    0,& \mathrm{otherwise}.
  \end{cases}
  \label{eqn:bump}
\end{equation}
Here $Z$ is a normalization constant chosen so that $\int
\wh{\varphi}(k)\ud k = 1$, and we choose $\sigma=\frac18$. An
example of the mollification function $\wh{\theta}(k)$ is given in
Fig.~\ref{fig:theta}. 



To remove the singularity of the symbol $k^2$ near the edge of the
Fourier grid $\mc{K}$, we introduce a mollified kernel of Laplacian operator in
the Fourier domain as
\begin{equation}
  \wh{h}(k) = \wh{\theta}(k) (k^2 - k_{c}^2)  + k_{c}^2, \quad k\in\RR.
  \label{eqn:hmodlap}
\end{equation}
It is easy to verify that $\wh{\theta}\in C^{\infty}(\mathbb{R})$, and
then $\wh{h}(k)\in C^{\infty}(\mathbb{R})$.  However, since the bump
function $\wh{\varphi}$ is only $C^{\infty}(\mathbb{R})$ but not real
analytic at $k=\pm \sigma k_{c}$, its Fourier transform is known to
decay super-algebraically and
sub-exponentially~\cite{Johnson2015}. Hence exponential decay of the
off-diagonal direction of the Green's function cannot be
expected. Below we prove that for such choice of the mollified
pseudo-spectral scheme, the off-diagonal direction of the Green's
function decays super-algebraically. To this end we follow
Corollary~\ref{cor:discretedecaym} and need to bound the high order
difference operators applied to $\wh{g}$. Our current proof does not
give sub-exponential bound, which is an interesting future direction.

We assume that for each integer $m\ge 0$, there exists constants
$C_{\theta,m}$ independent of $k_{c}$ so that (recall that $k_c
\geq \pi$ and hence $\sigma k_c$ is bounded from below by $\pi / 8$)
\[
\norm{ \frac{\ud^{m} \wh{\theta} }{\ud k^m} }_{\infty} \le C_{\theta,m}, 
\]
and hence
\begin{equation}\label{eq:boundDtheta}
  \norm{ \mc{D}^{(m)} \wh{\theta} }_{\infty} \le C_{\theta,m}.
\end{equation}

We further have the
following lemma for controlling the derivative of $\wh{h}_{k}\equiv
\wh{h}(k)$.
\begin{lemma}\label{lem:hbound1}

  Assume $1\le m \leq M = \frac{N}{16}$,
  $\Delta x \leq 1$ and $L\geq 1$. Then
  there exist constants $C_{h,m}$ independent of $k_{c},L$ such that
  \begin{equation*}
    \norm{\frac{\mc{D}^{(m)}\wh{h}}{1 + \wh{h}}}_{\infty} 
    \le C_{h,m}.
  \end{equation*}
\end{lemma}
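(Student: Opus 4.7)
The plan is to reduce the quantity $\mc{D}^{(m)}\wh{h}$ to a finite sum over expressions in which the only ``large'' factor is $\wh{h}$ itself evaluated at nearby points, and then to use the automatic lower bound on $1+\wh{h}$ wherever such a factor can be large. Since $\wh{h}(k)=\wh{\theta}(k)(k^2-k_c^2)+k_c^2$ and the constant term $k_c^2$ is annihilated by $\mc{D}$, for every $m\ge 1$ we have $\mc{D}^{(m)}\wh{h}=\mc{D}^{(m)}[\wh{\theta}\, p]$, where $p(k):=k^2-k_c^2$.

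The second step is to iterate the discrete Leibniz rule (Lemma~\ref{lem:proddiff}) $m$ times. This produces a finite sum of at most $2^m$ terms of the form
\[
c_{m,j}\,(\mc{D}^{(j)}\wh{\theta})(k-s_j\Delta k)\,(\mc{D}^{(m-j)} p)(k-s_j'\Delta k),\qquad 0\le s_j,s_j'\le m-j \text{ or } \le j.
\]
Here $c_{m,j}$ are integer combinatorial coefficients and the exact shifts are irrelevant for $L^\infty$ estimates. Two observations make the factors very transparent: by \eqref{eq:boundDtheta}, $|\mc{D}^{(j)}\wh{\theta}|\le C_{\theta,j}$ uniformly; and a direct computation gives $(\mc{D}p)_k=2k-\Delta k$, $(\mc{D}^{(2)}p)_k=2$, and $(\mc{D}^{(r)}p)_k=0$ for $r\ge 3$. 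Thus only the terms $j=m,\,m-1,\,m-2$ (and $j=0$ when $m\le 2$) can contribute, and each is bounded by a constant times $k_c^2$, $k_c$, or $1$ respectively, using $|p|\le k_c^2$ and $|k|\le k_c$.

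The third step is to control the ratio with $1+\wh{h}$ by splitting into the ``interior'' and the ``transition/outer'' regions. In the interior $|k|\le k_c/2-m\Delta k$, the values $\wh{\theta}(k-s\Delta k)$ are identically $1$ for $s=0,\dots,m$, so every difference $\mc{D}^{(j)}\wh{\theta}$ in the expansion vanishes, leaving only the $j=0$ term $\wh{\theta}(k)(\mc{D}^{(m)}p)(k)$. This term is at most $|2k-\Delta k|$ for $m=1$, equals $2$ for $m=2$, and vanishes for $m\ge 3$; in each case the ratio with $1+\wh{h}(k)=1+k^2$ is bounded by an absolute constant. In the complementary region the bookkeeping step is to use the standing assumption $m\le M=N/16$ together with $\Delta k=2\pi/L$ and $k_c=\pi/\Delta x$ to conclude $m\Delta k\le k_c/8$, so that $|k|\ge k_c/2-k_c/8=3k_c/8$. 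Using that $\wh{h}$ is the convex combination $\wh{\theta}k^2+(1-\wh{\theta})k_c^2$ of quantities both $\ge k^2$, we get $1+\wh{h}(k)\ge 1+k^2\ge 9k_c^2/64$, and together with $k_c\ge\pi$ this yields $1+\wh{h}(k)\ge c\,k_c^2$. Dividing each Leibniz term by $c\,k_c^2$ matches the $k_c^2$, $k_c$, $1$ bounds on the numerators and produces constants $C_{h,m}$ depending only on the $C_{\theta,j}$ for $j\le m$.

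The main obstacle I expect is the shift bookkeeping: in the discrete Leibniz formula the two factors are sampled at slightly different arguments, so one must verify that the \emph{shifted} $|k|$ is still $\ge 3k_c/8$ whenever one of the shifted $\wh\theta$-differences is nonzero, and simultaneously that the original $|k|$ (appearing in the denominator $1+\wh{h}(k)$) is bounded below by the same amount. Both reductions boil down to the single inequality $m\Delta k\le k_c/8$, which is where the cutoff $M=N/16$ is used. Once this geometric fact is in hand, the remainder of the proof is a matter of collecting the constants from Leibniz expansion and \eqref{eq:boundDtheta}.
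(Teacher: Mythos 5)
Your overall strategy---iterate the discrete Leibniz rule, observe that only the three highest-order $\wh{\theta}$-differences survive, and bound each surviving term against $1+\wh{h}_k\ge 1+k^2$ using the support of $\mc{D}^{(j)}\wh{\theta}$ together with $m\Delta k\le k_c/8$---is the same as the paper's. But one step fails as written: the blanket claim that $(\mc{D}^{(2)}p)_k=2$ and $(\mc{D}^{(r)}p)_k=0$ for $r\ge 3$, where $p(k)=k^2-k_c^2$. The operator $\mc{D}$ acts periodically on $\mc{K}$, and $k^2$ does not extend to a quadratic across the wrap-around at the edge of the Fourier grid; only the first difference survives the identification of $(-k_c)^2$ with $k_c^2$. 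On the $r$ grid points adjacent to the edge, $(\mc{D}^{(2)}p)_k$ is of order $k_c/\Delta k=N/2$ rather than $2$, and the higher differences are nonzero and larger still. If such a term were genuinely present, dividing it by your lower bound $c\,k_c^2$ for the denominator would leave a contribution of order $N/k_c^2=L\Delta x/\pi^2$, which is not bounded uniformly in $L$---so the gap is not merely cosmetic, and without repairing it the claimed reduction to the terms $j=m,m-1,m-2$ is unjustified.

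The repair is exactly what the paper does with the sets $\mc{K}_m$: for each $n\ge 2$, the factor $\mc{D}^{(n)}[k^2]$ deviates from its polynomial value only within $n$ grid points of the edge of $\mc{K}$, and there the accompanying factor $(\mc{D}^{(m-n)}\wh{\theta})_{k-n\Delta k}$ vanishes identically, because $\wh{\theta}$ is supported in $\abs{k}\le \tfrac{3}{4}k_c$ and the total number of grid points consumed by the shift and the difference stencil is at most $2m\le N/8$, the width of the dead zone on each side. This is the real place where the cutoff $M=N/16$ is needed; your inequality $m\Delta k\le k_c/8$ is the same arithmetic, but your ``shift bookkeeping'' paragraph aims it only at keeping the denominator large, which is the easier half of the issue. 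Once the boundary terms are discarded in this way, the rest of your argument (the interior/transition splitting, the bound $1+\wh{h}_k\ge 1+k^2\ge 1+9k_c^2/64$ on the transition region, and the collection of the constants $C_{\theta,j}$) goes through and coincides with the paper's proof.
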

\begin{proof}
  Use Lemma~\ref{lem:proddiff},
  \[
  (\mc{D}\wh{h})_{k} = (\mc{D}\wh{\theta})_{k} (k^2-k_c^2) +
  \wh{\theta}_{k-\Delta k} (\mc{D}[k^2])_{k} = 
  (\mc{D}\wh{\theta})_{k} (k^2-k_c^2) +
  \wh{\theta}_{k-\Delta k} (2k-\Delta k) , \quad k\in \mc{K}, 
  \]
  where the last equality used that $k^2$ is interpreted in the
  periodic sense.  Thus,
  \begin{equation}\label{eq:Dh}
    \frac{(\mc{D} \wh{h})_k}{1 + \wh{h}_k}
    = \frac{k^2-k_c^2}{1 + \wh{h}_k}   (\mc{D}\wh{\theta})_{k} +
    \frac{2k-\Delta k}{1 + \wh{h}_k}  \wh{\theta}_{k-\Delta k}. 
  \end{equation}
  To bound the right hand side, we use 
  \begin{equation}\label{eq:estimate1}
    \abs{\frac{2k-\Delta k}{1 + \wh{h}_k}} = 
    \abs{\frac{2k - \Delta k}{ 1 + \wh{\theta}(k) (k^2 - k_{c}^2)  + k_{c}^2}}
    \leq \abs{\frac{2k - \Delta k}{1 + k^2}} \le 1 + \Delta k \le 10.
  \end{equation}
  Here $L \ge 1$ and hence $\Delta k\le 2\pi$.
  For the first term of the right hand side of \eqref{eq:Dh}, note that \begin{equation}
    (\mc{D} \wh{\theta})_k = 0, \qquad \text{if }\abs{k} \leq \frac{1}{2} k_c - \Delta k.
  \end{equation}
  Moreover, for $\abs{k} \geq \frac{1}{4} k_c$, we have 
  \begin{equation}\label{eq:estimate2}
    \abs{\frac{k^2-k_c^2}{1 + \wh{h}_k}} = 
    \abs{\frac{k^2-k_c^2}{ 1 + \wh{\theta}(k) (k^2 - k_{c}^2)  + k_{c}^2}} 
    \leq \frac{2 k_c^2} {1 + k^2} \le 32,
  \end{equation}
  where the last inequality uses the lower bound of $k$ as
  assumed. Therefore, we arrive at
  \begin{equation}
    \norm{\frac{\mc{D} \wh{h}}{1 + \wh{h}}}_{\infty} \le 
    \left(32 \norm{\mc{D} \wh{\theta}}_{\infty} + 10 \norm{\wh{\theta}}_{\infty}
    \right) := C_{h,1}.
  \end{equation}
  where we have used \eqref{eq:boundDtheta} in the last inequality. 

  Controlling higher derivatives of $\wh{h}$ is similar. Apply
  $\mc{D}$ and Lemma~\ref{lem:proddiff} for $m$ times on both sides of
  Eq.~\eqref{eqn:hmodlap}, we obtain
  \begin{multline}
    (\mc{D}^{(m)}\wh{h})_{k} = (\mc{D}^{(m)}\wh{\theta})_{k}
    (k^2-k_c^2) + {m \choose 1} (\mc{D}^{(m-1)}\wh{\theta})_{k-\Delta
      k} (2k-\Delta k) \\
    + {m \choose 2} 2 (\mc{D}^{(m-2)}\wh{\theta})_{k-2\Delta k},\quad
    k\in \mc{K}.
    \label{eqn:hdiffm}
  \end{multline}
  The reason why $\mc{D}^{2}[k^2]_{k}$ can be replaced by $2$ is because
  $(\mc{D}^{(m-2)}\wh{\theta})_{k-2\Delta k}$ vanishes at the boundary of
  $\mc{K}$. Similarly the right hand side of the equation
  Eq.~\eqref{eqn:hdiffm} stops at the term $(\mc{D}^{(m-2)}\wh{\theta})$ is
  because when $3\le m\le M$, 
  let 
  \[
  \mc{K}_{m} = \left\{ \left(-\frac{N}{2}+m\right)\Delta k,\ldots,
  \left(\frac{N}{2}-m+1\right) \Delta k \right\},
  \]
  we have
  \[
  (\mc{D}^{(m)} [k^2])_{k} = 0,\quad k\in \mc{K}_{m}.
  \]
  On the other hand,  since
  \[
  \wh{\theta}_{k} = 0,\quad k\in
  \mc{K}\backslash\mc{K}_{\frac{N}{8}},
  \]
  then for all $3\le m\le M$,
  \[
  (\mc{D}^{(m)} \wh{\theta})_{k} = 0,\quad k\in \mc{K}\backslash
  \mc{K}_{\frac{N}{8}-m}.
  \]
  Since $2m \le \frac{N}{8}$, all terms of the form 
  \[
  (\mc{D}^{(m-n)}\wh{\theta})_{k-n\Delta k} (\mc{D}^{(n)}
  [k^2])_{k} = 0, \quad k\in \mc{K}, \quad 3\le n\le m.
  \]
  Hence
  \begin{multline}
    \frac{(\mc{D}^{(m)}\wh{h})_{k}}{1 + \wh{h}_k} = \frac{k^2-k_c^2}{
      1 + \wh{h}_k} (\mc{D}^{(m)}\wh{\theta})_{k} + 
      \frac{2k-\Delta k}{1 + \wh{h}_k} {m \choose 1} (\mc{D}^{(m-1)}\wh{\theta})_{k-\Delta
      k}  \\
    + \frac{2}{1 + \wh{h}_k} {m \choose 2} 
    (\mc{D}^{(m-2)}\wh{\theta})_{k-2\Delta k},\quad k\in \mc{K}.
  \end{multline}
  Using \eqref{eq:estimate1}, \eqref{eq:estimate2}, and the fact that
  $(\mc{D}^{(m)}\wh{\theta})_k = 0$ for $m \leq \frac{N}{16}$ and
  $\abs{k} \leq \frac{1}{4} k_c$, we arrive at 
  \begin{multline*}
    \norm{\frac{\mc{D}^{(m)}\wh{h}}{1 + \wh{h}}}_{\infty} 
    \le (32\norm{\mc{D}^{(m)} \wh{\theta}}_{\infty} 
    + 2m \norm{\mc{D}^{(m-1)}\wh{\theta}}_{\infty} 
    + m(m-1) \norm{\mc{D}^{(m-2)}\wh{\theta}}_{\infty}) := C_{h,m}. 
  \end{multline*}
\end{proof}

\medskip 

For the mollified pseudo-spectral discretization, we replace $k^2$ by
$\wh{h}_{k}$ for all $k\in \mc{K}$. We study below the
decay properties of $g$ with Fourier transform
denoted by $\wh{g}$. From Eq.~\eqref{eqn:psdiscrete2},
$\wh{g}$ satisfies 
\begin{equation}
  (1 + \wh{h}_{k}) \wh{g}_{k} = -1, \quad k\in \mc{K}.
  \label{eqn:modlap}
\end{equation}
Applying $\mc{D}$ to both sides of Eq.~\eqref{eqn:modlap} and use
Lemma~\ref{lem:proddiff}, we have
\begin{equation}
  (\mc{D}\wh{h})_{k} \wh{g}_{k-\Delta k} + (1+\wh{h}_{k})
  (\mc{D} \wh{g})_{k} = 0,\quad k\in \mc{K}.
  \label{eqn:modlapdif1}
\end{equation}


\begin{theorem}\label{thm:Dmboundfinish}
  Let $g\in L^2(\mc{X})$ be the inverse Fourier transform of $\wh{g}$
  defined in Eq.~\eqref{eqn:modlap}.
  Assume $N\ge 32$,
  $\Delta x\leq 1$ and $L\geq 1$.
  Then there exist constants $C_{g,m}$ independent of $L$ and $k_{c}$ such
  that for all $0\le m\le M=\frac{N}{16}$, 
  \[
  \norm{d(\cdot,0)^{m} g}_{2} \le C_{g,m}.
  \]
\end{theorem}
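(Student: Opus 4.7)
The plan is to combine Corollary~\ref{cor:discretedecaym} with an induction on $m$ that bounds $\|\mathcal{D}^{(m)}\widehat{g}\|_2$ using the defining relation $(1+\widehat{h}_k)\widehat{g}_k=-1$ and Lemma~\ref{lem:hbound1}. By Corollary~\ref{cor:discretedecaym} it suffices to prove that there exist constants $\widetilde{C}_{g,m}$, independent of $L$ and $k_c$, with
\begin{equation*}
  \|\mathcal{D}^{(m)}\widehat{g}\|_2 \leq \widetilde{C}_{g,m}, \qquad 0\leq m\leq M=\tfrac{N}{16}.
\end{equation*}
The decay estimate $\|d(\cdot,0)^m g\|_2\leq C_{g,m}$ then follows with $C_{g,m}=(\pi/2)^m (2\pi)^{-1/2}\widetilde{C}_{g,m}$.

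The base case $m=0$ is handled via Parseval~\eqref{eq:parseval} together with the explicit form $\widehat{g}_k=-1/(1+\widehat{h}_k)$. Since $\widehat{h}(k)=(1-\widehat{\theta}(k))k_c^2+\widehat{\theta}(k)k^2$ with $\widehat{\theta}\in[0,1]$ and $k^2\leq k_c^2$ on $\mathcal{K}$, we have $\widehat{h}_k\geq k^2$, and hence
\begin{equation*}
  \|g\|_2^2 = \frac{1}{2\pi}\|\widehat{g}\|_2^2 = \frac{\Delta k}{2\pi}\sum_{k\in\mathcal{K}}\frac{1}{(1+\widehat{h}_k)^2}\leq \frac{\Delta k}{2\pi}\sum_{k\in\mathcal{K}}\frac{1}{(1+k^2)^2}.
\end{equation*}
The right hand side is a Riemann sum of the integrable function $1/(1+k^2)^2$ and is therefore bounded by an absolute constant, uniformly in $\Delta k\leq 2\pi$ (i.e.\ $L\geq 1$) and $k_c$.

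For the inductive step, apply $\mathcal{D}^{(m)}$ to both sides of $(1+\widehat{h}_k)\widehat{g}_k=-1$. Since $\mathcal{D}^{(m)}$ annihilates constants for $m\geq 1$, and iterating the first identity of Lemma~\ref{lem:proddiff} gives the generalized discrete Leibniz rule
\begin{equation*}
  \mathcal{D}^{(m)}[\widehat{f}\widehat{g}]_k
  = \sum_{j=0}^{m}\binom{m}{j}\bigl(\mathcal{D}^{(m-j)}\widehat{f}\bigr)_k\bigl(\mathcal{D}^{(j)}\widehat{g}\bigr)_{k-(m-j)\Delta k},
\end{equation*}
we isolate the $j=m$ term (which carries the factor $1+\widehat{h}_k$) to obtain
\begin{equation*}
  \bigl(\mathcal{D}^{(m)}\widehat{g}\bigr)_k = -\sum_{j=0}^{m-1}\binom{m}{j}\frac{\bigl(\mathcal{D}^{(m-j)}\widehat{h}\bigr)_k}{1+\widehat{h}_k}\bigl(\mathcal{D}^{(j)}\widehat{g}\bigr)_{k-(m-j)\Delta k}.
\end{equation*}
Lemma~\ref{lem:hbound1} (which is available since $m-j\leq m\leq M=N/16$) bounds each coefficient $(\mathcal{D}^{(m-j)}\widehat{h})/(1+\widehat{h})$ in $L^\infty(\mathcal{K})$ by $C_{h,m-j}$, uniformly in $L$ and $k_c$. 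Taking $L^2(\mathcal{K})$ norms, using that $L^2$ norms are invariant under the periodic index shifts $k\mapsto k-(m-j)\Delta k$, yields the recursion
\begin{equation*}
  \|\mathcal{D}^{(m)}\widehat{g}\|_2 \leq \sum_{j=0}^{m-1}\binom{m}{j} C_{h,m-j}\,\|\mathcal{D}^{(j)}\widehat{g}\|_2,
\end{equation*}
and the induction hypothesis closes the argument.

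The main obstacle, which has already been absorbed into Lemma~\ref{lem:hbound1}, is controlling $(\mathcal{D}^{(m-j)}\widehat{h})/(1+\widehat{h})$ uniformly in $L$ and $k_c$ despite the fact that the unmollified symbol $k^2$ is non-smooth at the Fourier edge $\pm k_c$; once that is in hand, the only bookkeeping left is the discrete Leibniz rule with shifts and the observation that shifts preserve the $L^2(\mathcal{K})$ norm, both of which are routine.
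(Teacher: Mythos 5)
Your proposal is correct and takes essentially the same route as the paper's proof: bound $\norm{\wh{g}}_2$ by a Riemann-sum comparison with $\int(1+k^2)^{-2}\ud k$, then induct on $m$ by applying the iterated discrete Leibniz rule to $(1+\wh{h}_k)\wh{g}_k=-1$, isolating the top-order term and controlling the coefficients $(\mc{D}^{(m-j)}\wh{h})/(1+\wh{h})$ via Lemma~\ref{lem:hbound1}, and conclude through Corollary~\ref{cor:discretedecaym}. The only difference is cosmetic bookkeeping in the index shifts of the Leibniz expansion, which, as you note, is immaterial because the $L^2(\mc{K})$ norm is invariant under periodic shifts.
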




\begin{proof}
  First,
  \[
  \begin{split}
  \norm{\wh{g}}_{2}^2 =& \Delta k \sum_{k\in \mc{K}}
  \frac{1}{(1+\wh{h}_{k})^2} \le \Delta k \sum_{k\in \mc{K}}
  \frac{1}{(1+k^2)^2} \\
  \le& \int_{-\infty}^{\infty}
  \frac{1}{(1+k^2)^2} \ud k + \Delta k < \int_{-\infty}^{\infty}
  \frac{1}{(1+k^2)^2} \ud k + 2\pi := C_{g,0}^{2}.
  \end{split}
  \]
  Here we used that $\wh{h}_{k}\ge k^2$ for $k\in \mc{K}$, and
  $\Delta k = 2\pi/L \leq 2\pi$.
  From Eq.~\eqref{eqn:modlapdif1}, we have
  \begin{equation*}
    \abs{(\mc{D}\wh{g})_{k}} = \abs{\frac{(\mc{D}\wh{h})_{k}
  \wh{g}_{k-\Delta k}}{1+\wh{h}_{k}}}, 
  \end{equation*}
  thus
  \begin{equation*}
    \norm{ \mc{D} \wh{g} }_2 \leq  \norm{\frac{\mc{D}
    \wh{h}}{1 + \wh{h}}}_{\infty} \norm{\wh{g}}_2 \le
    C_{h,1}\norm{\wh{g}}_2 := C_{g,2},
  \end{equation*}
  where the last inequality uses Lemma~\ref{lem:hbound1}. 

  Apply $\mc{D}$ and
  Lemma~\ref{lem:proddiff} for $m$ times on both sides of
  Eq.~\eqref{eqn:modlap}, we have
  \[
  \sum_{n=0}^{m-1} {m \choose n}(\mc{D}^{(m-n)}\wh{h})_{k}
  (\mc{D}^{(n)}\wh{g})_{k-n\Delta k} + (1 + \wh{h}_{k})
  (\mc{D}^{(m)}\wh{g})_{k} = 0,\quad k\in \mc{K}.
  \]
  Hence
  \begin{equation}\label{eq:Dmfbound}
    \begin{aligned}
      \norm{ \mc{D}^{(m)} \wh{g} }_2 & \leq \sum_{n=0}^{m-1} {m
        \choose n} 
        \norm{\frac{\mc{D}^{(m-n)}\wh{h}}{1 + \wh{h}}}_{\infty}
      \norm{ \mc{D}^{(n)}\wh{g} }_2  \\
      & \le \sum_{n=0}^{m-1} {m
      \choose n} C_{h,m-n} C_{g,n} := C_{g,m}.
    \end{aligned}
  \end{equation}
\end{proof}

The case with general value of $\lambda$ in the resolvent set of
$H$, and general potential function $V(x)$ is very similar. We denote by 
$V:\mc{X}\to \RR$ the value of the potential function $V(x)$ evaluated
on the lattice $\mc{X}$. The Fourier transform of $V$ is denoted by $\wh{V}$.
Define  the matrix in the Fourier space
\begin{equation}
  \wh{H}_{kl} = \wh{h}_{k} \delta_{kl} + \frac{1}{L}\wh{V}_{k-l}, \quad
  k,l\in \mc{K}.
  \label{eqn:fourierH}
\end{equation}
Here $\delta_{kl}$ is the Kronecker-$\delta$ symbol.
Then the mollified pseudo-spectral discretization of Eq.~\eqref{eqn:problem},
represented in the Fourier space becomes
\begin{equation}
  \lambda \wh{g}_{k} - \sum_{l\in \mc{K}} \wh{H}_{kl} \wh{g}_{l} = 1, \quad k\in
  \mc{K}.
  \label{eqn:modVeq}
\end{equation}
When repeatedly applying $\mc{D}$ to both sides of
Eq.~\eqref{eqn:modVeq}, Lemma~\ref{lem:fVdiff} indicates that all the
differences can be applied to $\wh{g}$.
\begin{lemma}\label{lem:fVdiff}
  \[
  \left[\mc{D}\left(\sum_{l\in \mc{K}} \wh{V}_{\cdot-l}
  \wh{g}_{l}\right)\right]_{k} = \sum_{l\in
  \mc{K}} \wh{V}_{k-l} (\mc{D}\wh{g})_{l}, \quad k\in \mc{K}.
  \]
\end{lemma}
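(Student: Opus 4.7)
The plan is to prove this by a direct computation using only the definition of the difference operator $\mc{D}$ and a re-indexing of the sum, exploiting the periodicity of the Fourier lattice $\mc{K}$. This is essentially the discrete Fourier analog of the fact that differentiation commutes with convolution, and that one can ``transfer'' the derivative from one factor of a convolution to the other.

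First I would unfold the definition of $\mc{D}$ from Eq.~\eqref{eqn:Doperator}, writing
\[
\left[\mc{D}\Bigl(\sum_{l\in \mc{K}} \wh{V}_{\cdot-l}\,\wh{g}_{l}\Bigr)\right]_{k}
= \frac{1}{\Delta k}\Bigl(\sum_{l\in\mc{K}} \wh{V}_{k-l}\,\wh{g}_{l} - \sum_{l\in\mc{K}} \wh{V}_{k-\Delta k-l}\,\wh{g}_{l}\Bigr).
\]
Then, in the second sum, I would perform the index shift $l \mapsto l+\Delta k$, which is legitimate because all functions of $k$ here are interpreted in the periodic sense on $\mc{K}$ (as explicitly noted after the definition of $\mc{D}$ in Eq.~\eqref{eqn:Doperator}), so shifting the summation index by $\Delta k$ merely permutes the terms of the sum over $\mc{K}$. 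After this substitution, $\wh{V}_{k-\Delta k - l}$ becomes $\wh{V}_{k - l}$ while $\wh{g}_{l}$ becomes $\wh{g}_{l-\Delta k}$.

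Combining the two sums then gives
\[
\frac{1}{\Delta k}\sum_{l\in \mc{K}} \wh{V}_{k-l}\bigl(\wh{g}_{l} - \wh{g}_{l-\Delta k}\bigr) = \sum_{l\in \mc{K}} \wh{V}_{k-l}\,(\mc{D}\wh{g})_{l},
\]
which is exactly the claimed identity. I expect no real obstacle here beyond being careful about the periodicity justification of the shift; unlike the Leibniz-type identity in Lemma~\ref{lem:proddiff}, there is no boundary correction because the shift acts on the summation variable rather than on a fixed point of evaluation, so the two ``defect'' terms that would ordinarily arise from endpoint mismatch cancel out globally. This is the reason the lemma gives a cleaner identity than a naive Leibniz rule would suggest, and it is precisely what is needed to iterate $\mc{D}$ onto $\wh{g}$ alone when extending Theorem~\ref{thm:Dmboundfinish} to nonzero potentials.
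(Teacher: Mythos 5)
Your proof is correct and follows essentially the same route as the paper's: unfold the definition of $\mc{D}$, shift the summation index by $\Delta k$ in the second sum (justified by periodicity of $\mc{K}$), and recombine to transfer the difference onto $\wh{g}$. Nothing to add.
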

\begin{proof}
  \[
  \begin{split}
  \left[\mc{D}\left(\sum_{l\in \mc{K}} \wh{V}_{\cdot-l}
  \wh{g}_{l}\right)\right]_{k} =& 
  \frac{1}{\Delta k}\left\{\sum_{l\in \mc{K}} \wh{V}_{k-l} \wh{g}_{l} - 
  \sum_{l\in \mc{K}} \wh{V}_{k-\Delta k-l} \wh{g}_{l}\right\} \\
  =&
  \frac{1}{\Delta k}\left\{\sum_{l\in \mc{K}} \wh{V}_{k-l} \wh{g}_{l} - 
  \sum_{l\in \mc{K}} \wh{V}_{k-l} \wh{g}_{l-\Delta k}\right\} \\
  =&
  \sum_{l\in
  \mc{K}} \wh{V}_{k-l} (\mc{D}\wh{g})_{l}.
  \end{split}
  \]
\end{proof}

Now we prove the decay properties of discretized Green's functions for
the mollified pseudo-spectral discretization in
Theorem~\ref{thm:Dmboundgeneral}.
\begin{theorem}
  Let $g\in L^2(\mc{X})$ be the inverse Fourier transform of $\wh{g}$
  defined in Eq.~\eqref{eqn:modVeq}.
  Assume $N\ge 32$, $\Delta x\leq 1$, $L\geq 1$,
  and $V \in L^{\infty}(\mc{X})$.
  Assume the discretized Green's function $\wh{G}=(\lambda-\wh{H})^{-1}$
  has bounded matrix 2-norm
  $\norm{G}_{\mc{L}(L^{2}(\mc{K}))}$.  
  Then there exists constants $C_{V,g,m}$ independent of $L,k_{c}$ such
  that for all $0\le m\le M =
  \frac{N}{16}$, 
  \[ 
  \norm{d(\cdot,0)^{m} g}_{2} \le C_{V,g,m}.
  \]
  \label{thm:Dmboundgeneral}
\end{theorem}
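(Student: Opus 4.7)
The plan is to extend the argument of Theorem~\ref{thm:Dmboundfinish} to general $\lambda$ and nonzero $V$. Two new ingredients are needed: a resolvent factorization to replace the simple division by $1+\wh{h}$ that was available in the $V=0$, $\lambda=-1$ case, and the commutation identity of Lemma~\ref{lem:fVdiff}, which lets the finite-difference operator $\mc{D}$ pass through the $V$-convolution term.

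Step~1 (key operator bound). I would first establish that $(\lambda-\wh{H})^{-1}(1+\wh{h})$ is bounded on $L^2(\mc{K})$ uniformly in $L$ and $\Delta x$. This follows from the identity
\[
1+\wh{h} \;=\; (1+\lambda) \;-\; (\lambda-\wh{H}) \;-\; \tfrac{1}{L}\wh{V}\ast,
\]
where the Fourier convolution operator $\tfrac{1}{L}\wh{V}\ast$ has $L^2(\mc{K})$-operator norm equal to $\norm{V}_{L^{\infty}(\mc{X})}$ (by Parseval, since it corresponds to multiplication by $V$ in real space). Multiplying on the left by $(\lambda-\wh{H})^{-1}$ and invoking the assumed bound on $\norm{(\lambda-\wh{H})^{-1}}_{\mc{L}(L^2(\mc{K}))}$ yields the desired claim.

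Step~2 (recursion and induction). Applying $\mc{D}^{(m)}$ to both sides of Eq.~\eqref{eqn:modVeq}, using the iterated Leibniz rule of Lemma~\ref{lem:proddiff} on the diagonal piece $\wh{h}_k\wh{g}_k$ together with $m$ applications of Lemma~\ref{lem:fVdiff} on the $V$-convolution piece, produces, for $m\ge 1$,
\[
[(\lambda-\wh{H})\,\mc{D}^{(m)}\wh{g}]_k \;=\; \tilde{R}_m(k), \qquad \tilde{R}_m(k) \;:=\; \sum_{n=0}^{m-1}\binom{m}{n}(\mc{D}^{(m-n)}\wh{h})_k\,(\mc{D}^{(n)}\wh{g})_{k-n\Delta k}.
\]
I then factor $\mc{D}^{(m)}\wh{g} = \bigl[(\lambda-\wh{H})^{-1}(1+\wh{h})\bigr]\,(1+\wh{h})^{-1}\tilde{R}_m$, estimate the inner vector by
\[
\norm{(1+\wh{h})^{-1}\tilde{R}_m}_{2} \;\le\; \sum_{n=0}^{m-1}\binom{m}{n}\norm{\tfrac{\mc{D}^{(m-n)}\wh{h}}{1+\wh{h}}}_{\infty}\norm{\mc{D}^{(n)}\wh{g}}_{2},
\]
using Lemma~\ref{lem:hbound1} and the shift-invariance of the periodic $L^2(\mc{K})$ norm, and combine with Step~1 to close the induction. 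The base case $m=0$ uses the same factorization with $\wh{g} = (\lambda-\wh{H})^{-1}\mathbf{1}$: since $\wh{h}_k\ge k^2$, the estimate $\norm{(1+\wh{h})^{-1}\mathbf{1}}_{2}\le C_{g,0}$ from Theorem~\ref{thm:Dmboundfinish} applies verbatim, giving $\norm{\wh{g}}_{2}\le C_{V,g,0}$. Finally Corollary~\ref{cor:discretedecaym} converts the $\norm{\mc{D}^{(m)}\wh{g}}_{2}$-bounds into the desired bounds on $\norm{d(\cdot,0)^m g}_{2}$.

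The main obstacle is the unboundedness of the coefficient $(\mc{D}^{(m-n)}\wh{h})_k$, which grows like $k_c^2$ and therefore cannot be controlled in $L^{\infty}(\mc{K})$ on its own. The resolvent factorization $(\lambda-\wh{H})^{-1} = \bigl[(\lambda-\wh{H})^{-1}(1+\wh{h})\bigr](1+\wh{h})^{-1}$ is precisely the device that lets the auxiliary factor $(1+\wh{h})^{-1}$ absorb this growth through Lemma~\ref{lem:hbound1}, playing the role that the explicit denominator $1+\wh{h}$ did in the $V=0$ proof of Theorem~\ref{thm:Dmboundfinish}.
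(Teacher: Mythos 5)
Your proposal is correct and follows essentially the same route as the paper: the operator bound on $(\lambda-\wh{H})^{-1}(1+\wh{h})$ via the identity $1+\wh{h}=(1+\lambda)-(\lambda-\wh{H})-\wh{M}_V$ together with the Parseval bound on the $V$-convolution is exactly the paper's estimate \eqref{eq:GV}, and the recursion obtained from the iterated Leibniz rule plus Lemma~\ref{lem:fVdiff}, closed by inserting the factor $(1+\wh{h})^{-1}$ to absorb $\mc{D}^{(m-n)}\wh{h}$ through Lemma~\ref{lem:hbound1}, reproduces the paper's induction verbatim (including the base case via $\norm{(1+\wh{h})^{-1}}_2\le C_{g,0}$ and the final conversion by Corollary~\ref{cor:discretedecaym}).
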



\begin{proof}
  The proof is similar to the proof of
  Theorem~\ref{thm:Dmboundfinish}, and we will only focus on the new
  argument for treating general $\lambda$ and $V$.

  Note that for $k\in \mc{K}$,
  \begin{equation*}
    \begin{split}
    \frac{1}{L}\sum_{l\in\mc{K}} \wh{V}_{k-l} \wh{g}_l = &
    \frac{(\Delta x)^2}{L} \sum_{l\in\mc{K}} \sum_{x,x'\in\mc{X}} e^{-\I
    (k-l) x} V(x) e^{-\I l x'} g(x') \\
    = &\frac{(\Delta x)^2 N}{L} \sum_{x\in\mc{X}} e^{-\I k x} V(x) g(x) = \wh{(Vg)}_k. 
    \end{split}
  \end{equation*}
  Thus, using the Parseval's identity \eqref{eq:parseval}, we get
  \begin{equation}\label{eq:Vinfty}
    \norm{\frac{1}{L}\sum_{l\in\mc{K}} \wh{V}_{\cdot-l}
    \wh{g}_l}_{2} = \sqrt{2\pi}  \norm{ V g }_2 
    \leq \sqrt{2\pi}  \norm{V}_{\infty} \norm{g}_2.
  \end{equation}
  Let us introduce the notation $(\wh{M}_V)_{kl} := \frac{1}{L}
  \wh{V}_{k-l}$, and simply denote by $\wh{h}$ the diagonal matrix
  with diagonal entries being $\wh{h}_{k},k\in \mc{K}$. Then the above estimate shows that the matrix $2$-norm
  $\lVert\wh{M}_V\rVert_{\mc{L}(L^2(\mc{K}))}$ is bounded by
  $\sqrt{2\pi}\norm{V}_{\infty}$.  Notice that
  \begin{multline}\label{eq:GV}
    \norm{\wh{G} (1 + \wh{h})}_{\mc{L}(L^2(\mc{K}))} = \norm{\wh{G} (
    1+\lambda- \wh{M}_V-\lambda+\wh{H})}_{\mc{L}(L^2(\mc{K}))} \\
    \leq 1 + \norm{\wh{G}}_{\mc{L}(L^2(\mc{K}))}
    (\abs{1+\lambda}+\sqrt{2\pi}\norm{V}_{\infty}):=C_{\wh{G}},
  \end{multline}
  where the last inequality follows from \eqref{eq:Vinfty}. Hence,
  $\norm{\wh{G} (1 + \wh{h})}_{\mc{L}(L^2(\mc{K})}$ is bounded by the
  constant $C_{\wh{G}}$. 

  From Eq.~\eqref{eqn:modVeq} and use Theorem~\ref{thm:Dmboundfinish}, we have
  \[
  \norm{\wh{g}}_{2} \le \norm{\wh{G}(1+\wh{h})}_{\mc{L}(L^2(\mc{K}))}
  \norm{(1+\wh{h})^{-1}}_{2} \le C_{\wh{G}} C_{g,0} := C_{V,g,0}.
  \]

  For $m=1$, apply $\mc{D}$ on both side of Eq.~\eqref{eqn:modVeq}, and
  \[
  (\mc{D}\wh{h})_{k} \wh{g}_{k-\Delta k} + \sum_{l\in \mc{K}}\left[(\lambda -
  \wh{h}_{k})\delta_{kl} -
  \frac{1}{L}\wh{V}_{k-l}\right] (\mc{D}\wh{g})_{l} = 0.
  \]
  Hence
  \begin{equation*}
    (\mc{D}\wh{g})_l = \sum_{k \in \mc{K}} (\wh{G})_{lk}
    (1+\wh{h}_k) \frac{(\mc{D}\wh{h})_{k}}{1+\wh{h}_k} \wh{g}_{k-\Delta
    k}.
  \end{equation*}
  Thus, 
  \[
  \norm{\mc{D}\wh{g}}_{2}  \le \norm{
  G_{0}(1+\wh{h})}_{\mc{L}(L^2(\mc{K}))}
  \norm{\frac{\mc{D}\wh{h}}{1+\wh{h}}}_{\infty} \norm{\wh{g}}_{2}
  \le C_{\wh{G}} C_{h,1} C_{V,g,0} :=
  C_{V,g,1}
  \]
  For larger $m$, apply $\mc{D}$ to both sides of
  Eq.~\eqref{eqn:modVeq} for $m$ times, and use
  Lemma~\ref{lem:fVdiff}, we have
  \[
  \sum_{n=0}^{m-1} {m \choose n}(\mc{D}^{(m-n)}\wh{h})_{k}
  (\mc{D}^{(n)}\wh{g})_{k-n\Delta k} + 
  \sum_{l\in \mc{K}}
  \left((\lambda - \wh{h}_{k}) \delta_{kl} -
  \frac{1}{L}\wh{V}_{k-l}\right) (\mc{D}^{(m)}\wh{g})_{l} = 0,\quad k\in \mc{K}.
  \]
  Hence, 
  \begin{equation}
    (\mc{D}^{(m)}\wh{g})_{l} = - \sum_{n=0}^{m-1} {m \choose n}
    \sum_{k\in\mc{K}}\wh{G}_{lk} (1 + \wh{h}_k)
    \frac{(\mc{D}^{(m-n)}\wh{h})_{k}}{1 + \wh{h}_k}
    (\mc{D}^{(n)}\wh{g})_{k-n\Delta k}. 
  \end{equation}
  As $\lVert \wh{G}(1 + \wh{h})\rVert_{\mc{L}(L^2(\mc{K}))}$ is
  bounded, we arrived at the same inequality as in
  \eqref{eq:Dmfbound}. Therefore, the Theorem follows from a same
  induction method as in the proof of Theorem~\ref{thm:Dmboundfinish}.
\end{proof}

\section{Numerical examples}\label{sec:numer}

In this section we demonstrate with numerical experiments the
exponential decay estimate rate for the Green's function associated with
the finite difference (FD) method, and the super-algebraic decay rate
for the Green's function associate with the 
mollified pseudo-spectral (mPS) method.

The mPS scheme is constructed as follows. We mollify the pseudo-spectral
scheme using the mollification function $\wh{\theta}(k)$ in
Eq.~\eqref{eqn:theta} with $\sigma=\frac18$.
One
can verify that the scaling with respect to $k_{c}$ is consistent with
the definition of $\wh{\theta}(k)$ in Eq.~\eqref{eqn:thetacond}.
Fig.~\ref{fig:theta} depicts $\wh{\theta}(k)$ and $\wh{\theta}_0(k)$
for $L=40,\Delta x = 0.02$.

\begin{figure}[ht]
  \begin{center}
    \includegraphics[width=0.3\textwidth]{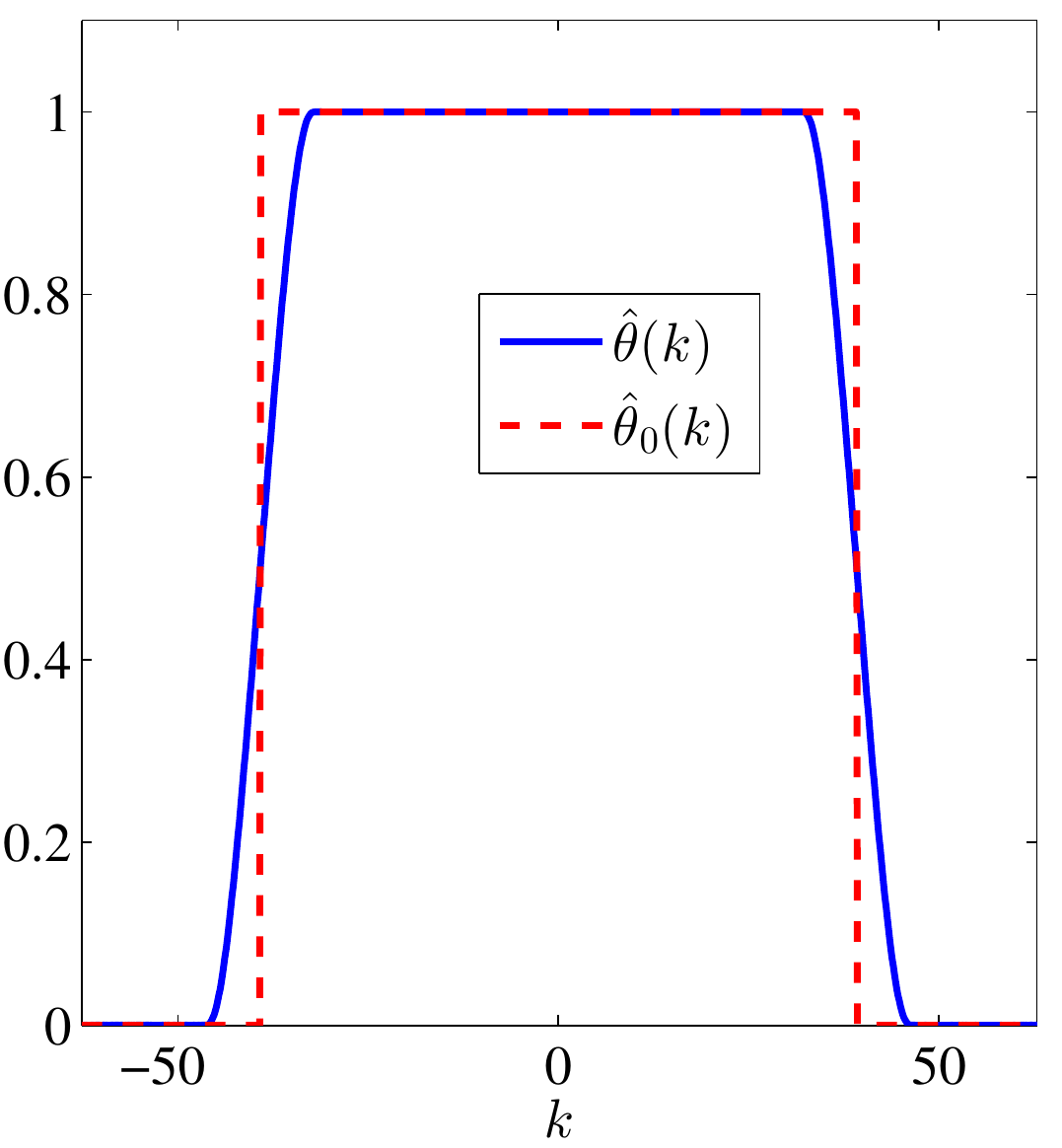}
  \end{center}
  \caption{$\wh{\theta}(k)$ compared with the
  function $\wh{\theta}_{0}(k)$ before smearing.}
  \label{fig:theta}
\end{figure}

First we consider the case when the Hamiltonian contains only the
Laplacian operator, i.e. $\lambda+\Delta$ with $\lambda=-10$. The domain
size $L=40$ and grid size $\Delta x=0.02$. We denote by $G(x,0)$ the
first column of the Green's function ($x\in\mc{X}$).
Fig.~\ref{fig:decaylap} shows $G(x,0)$ for the FD 
discretization decays
exponentially. The discretized Green's function obtained from the pseudo-spectral
method (PS) only decays exponentially up to $10^{-7}$, and then the
decay rate significantly decreases. This transition is
related to the consistency error of the PS scheme, and the transition
can occur at higher accuracy level
by refining $\Delta x$. As discussed in section~\ref{sec:ps}, 
the difficulty for establishing the 
decay properties of the discretized Green's function for the PS method
is that the kernel $k^2$ is not smooth in the Fourier space.
Hence the norms of high order differences of $k^2$ can not be uniformly
bounded.
In contrast, mPS modifies the Laplacian operator so that the diagonal of
the associated kernel is periodic and smooth. Fig.~\ref{fig:decaylap} shows that the Green's
function of mPS indeed decays super-algebraically.

\begin{figure}[ht]
  \begin{center}
    \includegraphics[width=0.3\textwidth]{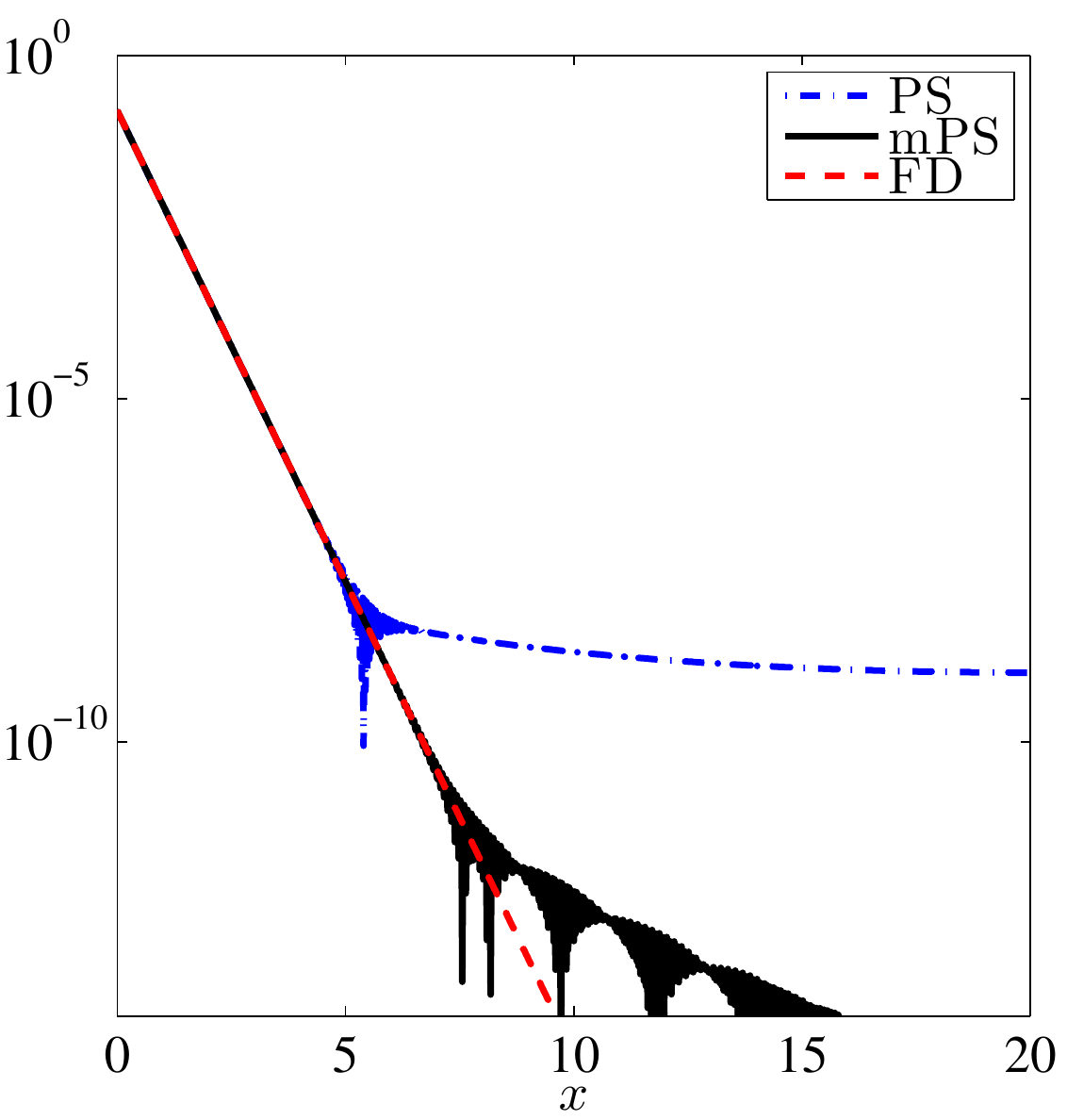}
  \end{center}
  \caption{Decay properties of the one column of the discretized Green's
  function for the operator $\lambda+\Delta$  with $\lambda=-10$.
  Here finite difference (FD), pseudo-spectral (PS) and
  mollified pseudo-spectral (mPS) methods are used. Due to periodic boundary
  condition only $G(x,0)$ for half of the interval $[0,L/2]$ is shown.
  Here $L=40,\Delta x=0.02$.}
  \label{fig:decaylap}
\end{figure}

Next we consider the operator $\lambda+\Delta - V(x)$ where $V(x)$ takes the
form of a Gaussian function which is not band limited, i.e.  $V(x) = 10
e^{-0.2 x^2}$, and $\lambda=-10$. The shape of the potential is shown in
Fig.~\ref{fig:decayVx} (a), and the decay rate for FD, mPS and PS are
given in Fig.~\ref{fig:decayVx} (b). Similar to the Laplacian case, the
addition of the potential function does not modify the behavior of the
decay rate. The off-diagonal elements of Green's function \REV{decay}
exponentially for FD, and super-algebraically for mPS. For PS, the
exponential decay only holds up to the consistency error near $10^{-7}$.

\begin{figure}[ht]
  \begin{center}
    \subfloat[]{\includegraphics[width=0.3\textwidth]{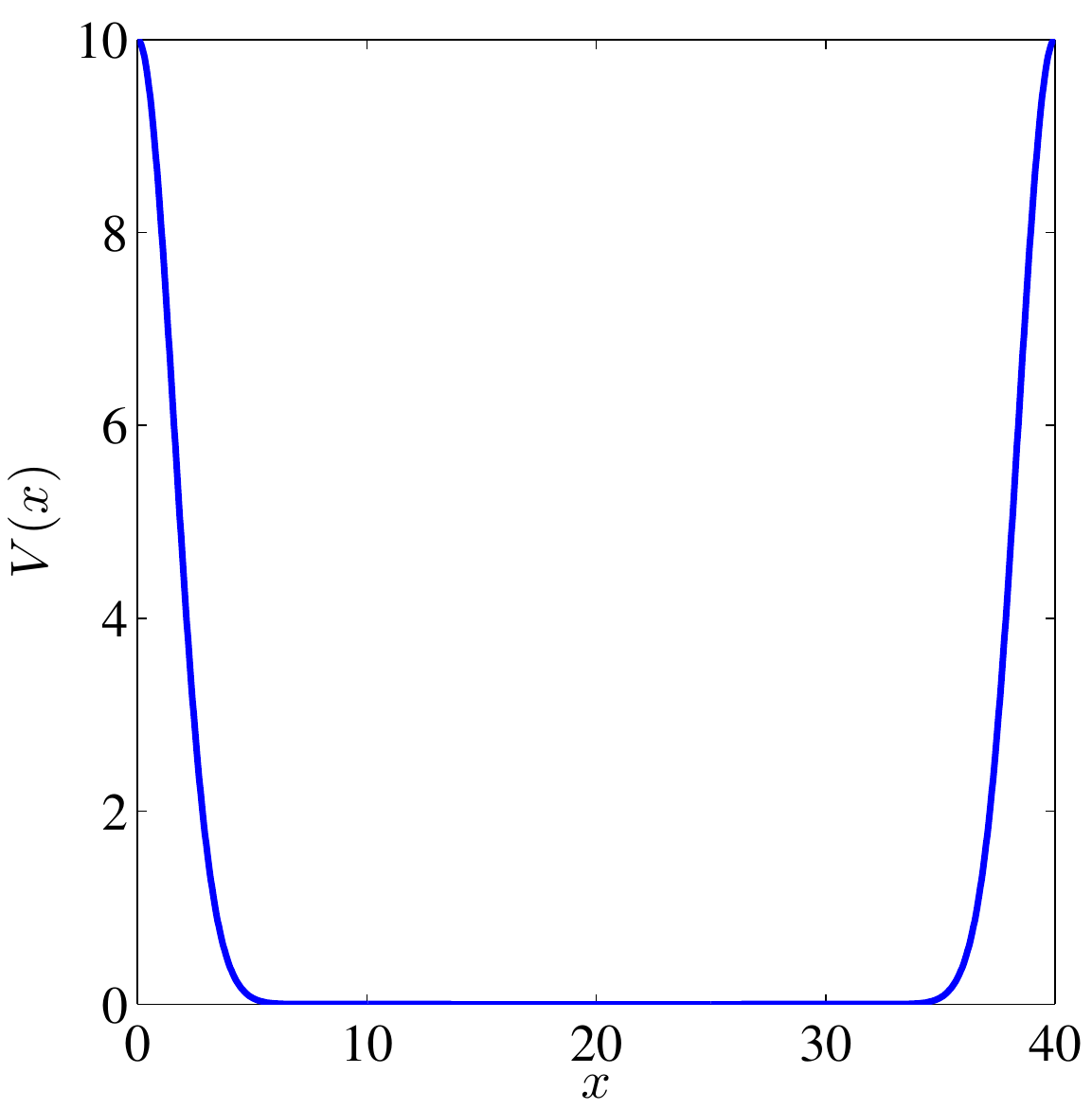}}
    \quad
    \subfloat[]{\includegraphics[width=0.3\textwidth]{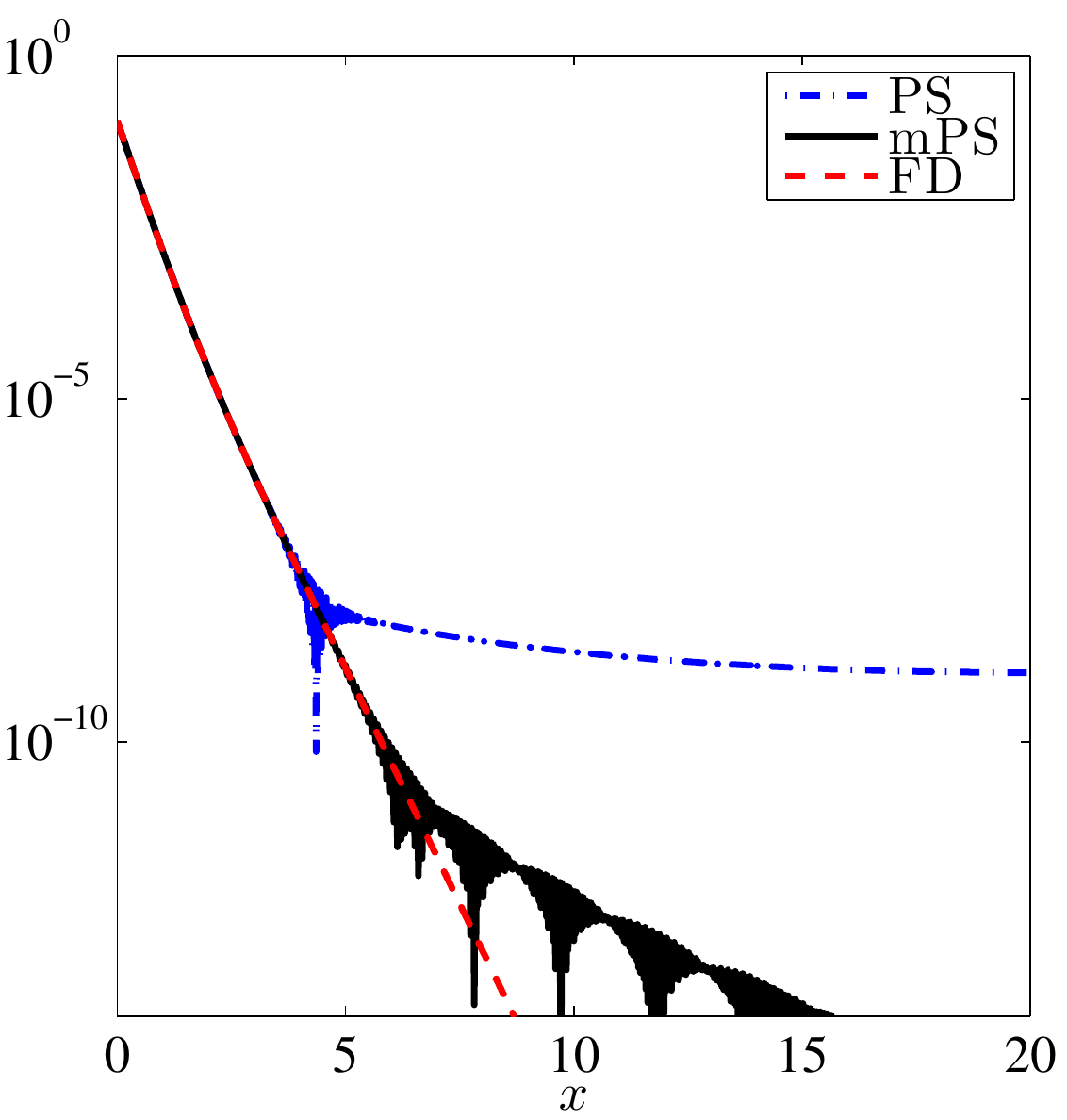}}
  \end{center}
  \caption{(a) Potential function $V(x)$. (b) Decay properties of the
  one column of the discretized Green's function for the operator
  $\lambda+\Delta - V(x)$  with $\lambda=-10$.  Here finite
  difference (FD), pseudo-spectral (PS) and mollified pseudo-spectral
  (mPS) methods are used. Due to periodic boundary condition only
  $G(x,0)$ for half of the interval $[0,L/2]$ is shown. Here $L=40,N=800$.}
  \label{fig:decayVx}
\end{figure}

Below we systematically measure the dependence of the decay rate with
respect to $L$ and $\Delta x$. Although the off-diagonal entries of the
discretized Green's function obtained from the mPS discretization only
\REV{decay} super-algebraically, we expect that the super-algebraic tail is independent of the domain size $L$ with fixed $\Delta x$. We
also expect that the decay behavior will become closer to exponential
decay when $L$ is fixed and $\Delta x$ is decreasing. In order to verify
this, 
consider $\lambda+\Delta-V(x)$ with $\lambda=-10$, and we measure the
exponential decay rate using $G(x,0)$ evaluated at two points $x_1=1.0$
and $x_2=7.0$,  for the mPS  method and FD method, respectively.  
We monitor a quantity $\gamma$ as below
\[
\gamma = -\frac{\log G(x_{2},0)-\log G(x_1,0)}{x_2-x_1}.
\]
$\gamma$ characters the exponential decay rate of $G(x,0)$, and a small
value of $\gamma$ indicates sub-exponential decay.
Fig.~\ref{fig:decayScal} (a) demonstrates the decay rate for
increasingly domain size $L$ from $40$ to $400$, with fixed grid size
$\Delta x = 0.02$. Similarly
Fig.~\ref{fig:decayScal} (b) demonstrate the decay rate for fixed domain
length $L=40$, but with decreasing grid size $\Delta x$ from $0.05$ to
$0.005$. Correspondingly the truncation in the Fourier domain
$k_{c}$ increases from $62.8$ to $628.3$. 
We observe that the decay rate of the finite difference scheme is very
stable and depends very weakly on both $L$ and $k_{c}$. For mPS scheme,
when $\Delta x$ is fixed, the decay rate is lower compared to the decay
rate of the finite difference method. This agrees with the
super-algebraic tail behavior observed in Fig.~\ref{fig:decaylap} and
~\ref{fig:decayVx}. When $L$ is fixed and $\Delta x$ is decreasing and
correspondingly $k_{c}$ is increasing, the decay rate improves as
$k_{c}$ increases, agreeing with our expectation.


\begin{figure}[ht]
  \begin{center}
    \subfloat[]{\includegraphics[width=0.32\textwidth]{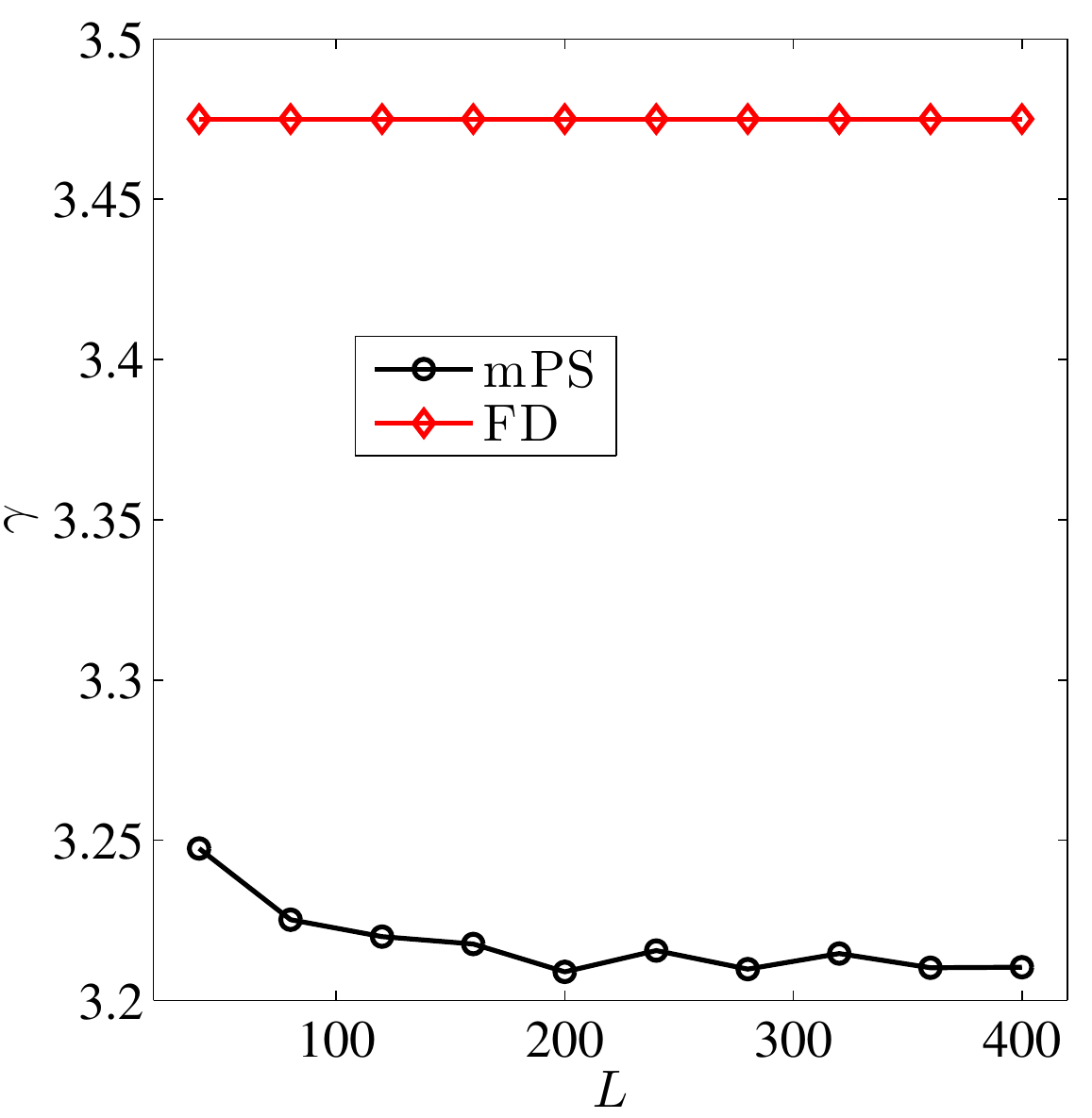}}
    \quad
    \subfloat[]{\includegraphics[width=0.3\textwidth]{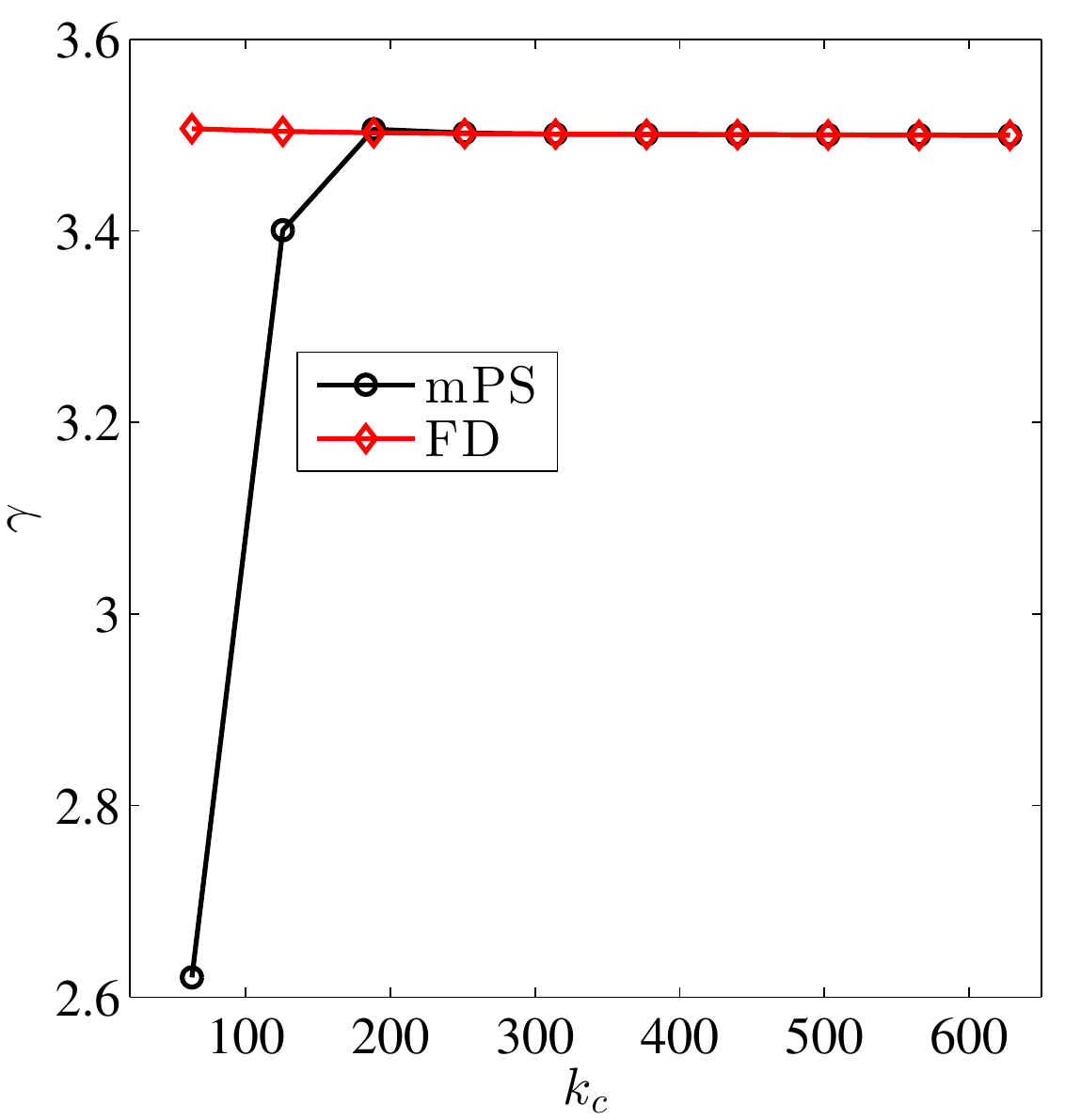}}
  \end{center}
  \caption{For $\lambda+\Delta-V(x)$ with $\lambda=-10$, measure the
  exponential decay rate $\gamma$ for (a) systems with fixed
  $\Delta x = 0.005$ and increasing $L$. (b) systems with fixed
  $L=40$ and increasing $k_{c}$ (and hence decreasing $\Delta x$).} 
  \label{fig:decayScal}
\end{figure}

\section{Conclusion}\label{sec:conclusion}

In this paper, we demonstrate that properly discretized Green's
functions for Schr\"odinger type operators satisfy off-diagonal decay
properties. More specifically, for the finite difference
discretization, the off-diagonal elements of the discretized Green's
function \REV{decay} exponentially. For the mollified pseudo-spectral
discretization, the off-diagonal elements of the discretized Green's
function \REV{decay} super-algebraically. In particular, we obtain
\REV{decay estimates of which the} asymptotic decay rate is
independent of the domain size $L$ and of the discretization parameter
such as the grid spacing.  Our analysis is verified by numerical
experiments for one-dimensional Schr\"odinger type operators. 
\REV{Generalization of our estimate to Schr\"odinger type
operators in higher dimensions is straightforward.}
Our numerical results also indicate that for the widely used
pseudo-spectral discretization, due to the non-smoothness of the
Laplacian operator at the boundary of the Fourier grid, the
asymptotic decay rate of discretized Green's function is only
polynomial with respect to the degrees of freedom. It has been
demonstrated that decay \REV{estimates} of Green's functions can provide
a useful truncation error criterion for designing numerical
schemes~\cite{BenziBoitoRazouk2013}, and our decay estimates can
be useful in correcting such error bound especially for operators with
large spectral radius. \REV{We have assumed uniform grid spacing for both
finite difference and pseudo-spectral discretization. This is most
suited for smooth and bounded potential $V$. The case with unbounded
potential $V$ with isolated singularity points (e.g. in the context of
all-electron calculations) will be studied in the future.}

\section*{Acknowledgments}

The work of L.L.~was partially supported by Laboratory Directed Research
and Development (LDRD) funding from Berkeley Lab, provided by the
Director, Office of Science, of the U.S. Department of Energy under
Contract No.  DE-AC02-05CH11231, the Alfred P. Sloan foundation, the DOE
Scientific Discovery through the Advanced Computing (SciDAC) program and
the DOE Center for Applied Mathematics for Energy Research Applications
(CAMERA) program. The work of J.L.~was supported in part by the Alfred
P. Sloan foundation, the National Science Foundation under awards
DMS-1312659 and DMS-1454939.

\begin{bibdiv}
\begin{biblist}

\bib{Agmon:65}{article}{
      author={Agmon, S.},
       title={On kernels, eigenvalues, and eigenfunctoins of operators related
  to elliptic problems},
        date={1965},
     journal={Comm. Pure Appl. Math.},
      volume={18},
       pages={627\ndash 663},
}

\bib{BenziBoitoRazouk2013}{article}{
      author={Benzi, M.},
      author={Boito, P.},
      author={Razouk, N.},
       title={Decay properties of spectral projectors with applications to
  electronic structure},
        date={2013},
     journal={SIAM Rev.},
      volume={55},
       pages={3\ndash 64},
}

\bib{BenziMeyerTuma1996}{article}{
      author={Benzi, M.},
      author={Meyer, C.D.},
      author={Tuma, M.},
       title={A sparse approximate inverse preconditioner for the conjugate
  gradient method},
        date={1996},
     journal={SIAM J. Sci. Comput.},
      volume={17},
       pages={1135\ndash 1149},
}

\bib{BenziRazouk2007}{article}{
      author={Benzi, M.},
      author={Razouk, N.},
       title={Decay bounds and {O}(n) algorithms for approximating functions of
  sparse matrices},
        date={2007},
     journal={Electron. Trans. Numer. Anal.},
      volume={28},
       pages={16\ndash 39},
}

\bib{BenziTuma1998}{article}{
      author={Benzi, M.},
      author={Tuma, M.},
       title={A sparse approximate inverse preconditioner for nonsymmetric
  linear systems},
        date={1998},
     journal={SIAM J. Sci. Comput.},
      volume={19},
       pages={968\ndash 994},
}

\bib{BowlerMiyazaki2012}{article}{
      author={Bowler, D.~R.},
      author={Miyazaki, T.},
       title={{O(N)} methods in electronic structure calculations},
        date={2012},
     journal={Rep. Prog. Phys.},
      volume={75},
       pages={036503},
}

\bib{ChenLu2014}{article}{
      author={Chen, J.},
      author={Lu, J.},
       title={Analysis of the divide-and-conquer method for electronic
  structure calculations},
        date={in press},
     journal={Math. Comp.},
}

\bib{CombesThomas:73}{article}{
      author={Combes, J.~M.},
      author={Thomas, L.},
       title={Asymptotic behavior of eigenfunctions for multi-particle
  {S}chr\"odinger operators},
        date={1973},
     journal={Commun. Math. Phys.},
      volume={34},
       pages={251\ndash –270},
}

\bib{Demko1977}{article}{
      author={Demko, S.},
       title={Inverses of band matrices and local convergence of spline
  projections},
        date={1977},
     journal={SIAM J. Numer. Anal.},
      volume={14},
       pages={616\ndash 619},
}

\bib{DemkoMossSmith1984}{article}{
      author={Demko, S.},
      author={Moss, W.~F.},
      author={Smith, P.~W.},
       title={Decay rates for inverses of band matrices},
        date={1984},
     journal={Math. Comput.},
      volume={43},
       pages={491\ndash 499},
}

\bib{ELu:11}{article}{
      author={E, W.},
      author={Lu, J.},
       title={The electronic structure of smoothly deformed crystals: {W}annier
  functions and the {C}auchy-{B}orn rule},
        date={2011},
     journal={Arch. Ration. Mech. Anal.},
      volume={199},
       pages={407\ndash 433},
}

\bib{ELu:13}{article}{
      author={E, W.},
      author={Lu, J.},
       title={The {K}ohn-{S}ham equation for deformed crystals},
        date={2013},
     journal={Mem. Amer. Math. Soc.},
      volume={221},
}

\bib{Goedecker1999}{article}{
      author={Goedecker, S.},
       title={{Linear scaling electronic structure methods}},
        date={1999},
     journal={Rev. Mod. Phys.},
      volume={71},
       pages={1085\ndash 1123},
}

\bib{Johnson2015}{article}{
      author={{Johnson}, S.~G.},
       title={{Saddle-point integration of $C_\infty$ ``bump'' functions}},
        date={2015},
     journal={arXiv: 1508.04376},
}

\bib{Kohn1996}{article}{
      author={Kohn, W.},
       title={Density functional and density matrix method scaling linearly
  with the number of atoms},
        date={1996},
     journal={Phys. Rev. Lett.},
      volume={76},
       pages={3168\ndash 3171},
}

\bib{KohnSham1965}{article}{
      author={Kohn, W.},
      author={Sham, L.},
       title={{Self-consistent equations including exchange and correlation
  effects}},
        date={1965},
     journal={Phys. Rev.},
      volume={140},
       pages={A1133\ndash A1138},
}

\bib{Lin2014}{article}{
      author={Lin, L.},
       title={Localized spectrum slicing},
        date={2014},
     journal={arXiv: 1411.6152},
}

\bib{ProdanKohn2005}{article}{
      author={Prodan, E.},
      author={Kohn, W.},
       title={{Nearsightedness of electronic matter}},
        date={2005},
     journal={Proc. Natl. Acad. Sci.},
      volume={102},
       pages={11635\ndash 11638},
}

\bib{Saad1994}{article}{
      author={Saad, Y.},
       title={{ILUT: A dual threshold incomplete LU factorization}},
        date={1994},
     journal={Numer. Linear Algebra Appl.},
      volume={1},
       pages={387\ndash 402},
}

\bib{Simon:83}{article}{
      author={Simon, B.},
       title={Semiclassical analysis of low lying eigenvalues. {I}.
  {N}ondegenerate minima: asymptotic expansions},
        date={1983},
     journal={Ann. Inst. H. Poincar\'e Sect. A},
      volume={38},
       pages={295\ndash 308},
}

\bib{ThomeeWestergren:68}{article}{
  author = {Thom\'ee, V.}, 
  author = {Westergren, B.}, 
  title = {Elliptic difference equations and interior regularity}, 
  date = {1968}, 
  journal = {Numer. Math.}, 
  volume = {11}, 
 pages = {196\ndash 210}
}

\bib{Yang1991}{article}{
      author={Yang, W.},
       title={Direct calculation of electron density in density-functional
  theory},
        date={1991},
     journal={Phys. Rev. Lett.},
      volume={66},
       pages={1438\ndash 1441},
}

\end{biblist}
\end{bibdiv}

%

\end{document}